\begin{document}

\begin{center}
{\LARGE\bf On elliptic modular foliations}
\\
\vspace{.25in} {\large {\sc Hossein Movasati}}\footnote{ Supported by Japan Society for the 
Promotion
of  Sciences.  \\
Keywords: Holomorphic foliations, Gauss-Manin connection, abelian varieties.
\\
Math. classification: 57R30 - 32G34
} \\
Instituto de Matem\'atica Pura e Aplicada, IMPA, \\
Estrada Dona Castorina, 110,\\
22460-320, Rio de Janeiro, RJ, Brazil, \\
E-mail: {\tt hossein@impa.br} \\
{\tt www.impa.br/$\sim$hossein }
\end{center}

\newtheorem{theo}{Theorem}
\newtheorem{exam}{Example}
\newtheorem{coro}{Corollary}
\newtheorem{defi}{Definition}
\newtheorem{prob}{Problem}
\newtheorem{lemm}{Lemma}
\newtheorem{prop}{Proposition}
\newtheorem{rem}{Remark}
\newtheorem{conj}{Conjecture}
\newcommand\diff[1]{\frac{d #1}{dz}} 
\def\End{{\rm End}}              
\def\hol{{\rm Hol}}
\def\sing{{\rm Sing}}            
\def\spec{{\rm Spec}}            
\def\cha{{\rm char}}             
\def\Gal{{\rm Gal}}              
\def\jacob{{\rm jacob}}          
\newcommand\Pn[1]{\mathbb{P}^{#1}}   
\def\Z{\mathbb{Z}}                   
\def\OO{{\cal O}}                       

\def\Q{\mathbb{Q}}                   
\def\C{\mathbb{C}}                   
\def\as{\mathbb{U}}                  
\def\ring{{R}}                             
\def\R{\mathbb{R}}                   
\def\N{\mathbb{N}}                   
\def\A{\mathbb{C}}                   
\def\D{\mathbb{D}}                   
\def\uhp{{\mathbb H}}                
\newcommand\ep[1]{e^{\frac{2\pi i}{#1}}}
\newcommand\HH[2]{H^{#2}(#1)}        
\def\Mat{{\rm Mat}}              
\newcommand{\mat}[4]{
     \begin{pmatrix}
            #1 & #2 \\
            #3 & #4
       \end{pmatrix}
    }                                
\newcommand{\matt}[2]{
     \begin{pmatrix}                 
            #1   \\
            #2
       \end{pmatrix}
    }
\def\ker{{\rm ker}}              
\def\cl{{\rm cl}}                
\def\dR{{\rm dR}}                

\def\hc{{\mathsf H}}                 
\def\Hb{{\cal H}}                    
\def\GL{{\rm GL}}                
\def\pedo{{\cal P}}                  
\def\PP{\tilde{\cal P}}              
\def\cm {{\cal C}}                   
\def\K{{\mathbb K}}                  
\def\k{{\mathsf k}}                  
\def\F{{\cal F}}                     
\def\M{{\cal M}}
\def\RR{{\cal R}}
\newcommand\Hi[1]{\mathbb{P}^{#1}_\infty}
\def\pt{\mathbb{C}[t]}               
\def\W{{\cal W}}                     
\def\Af{{\cal A}}                    
\def\gr{{\rm Gr}}                
\def\Im{{\rm Im}}                
\newcommand\SL[2]{{\rm SL}(#1, #2)}    
\newcommand\PSL[2]{{\rm PSL}(#1, #2)}  
\def\Res{{\rm Res}}              

\def\L{{\cal L}}                     
\def\Aut{{\rm Aut}}              
\def\any{R}                          
\newcommand\ovl[1]{\overline{#1}}    

\def\per{{\sf  pm}}  
\def\T{{\cal T }}                    
\def\tr{{\sf tr}}                 
\newcommand\mf[2]{{M}^{#1}_{#2}}     
\newcommand\bn[2]{\binom{#1}{#2}}    
\def\ja{{\rm j}}                 
\def\Sc{\mathsf{S}}                  
\newcommand\es[1]{g_{#1}}            
\newcommand\V{{\mathsf V}}           
\newcommand\Ss{{\cal O}}             
\def\rank{{\rm rank}}                
\def\diag{{\rm diag}}

\def\Ra{\mathrm{Ra}}
\def\nf{g_2}                         
\begin{abstract}
In this article we consider the three parameter  family of elliptic curves 
$E_t: y^2-4(x-t_1)^3+t_2(x-t_1)+t_3=0,\ t\in\C^3$ 
and study the modular holomorphic foliation $\F_{\omega}$ in $\C^3$ whose leaves 
 are constant locus of the integration of a 1-form $\omega$ over topological cycles of $E_t$.
 Using the Gauss-Manin connection of the family $E_t$, we show  that $\F_{\omega}$ is
 an algebraic foliation. In the case $\omega=\frac{xdx}{y}$, 
 we prove  that a transcendent leaf of $\F_{\omega}$ contains at most
 one point with algebraic coordinates and the leaves of $\F_{\omega}$ 
 corresponding to the zeros of integrals, never 
cross such a point. 
Using the generalized period map associated to the family $E_t$,
  we find  a uniformization of  $\F_{\omega}$ in $T$, 
  where $T\subset \C^3$ is the locus
  of parameters $t$ for which $E_t$ is smooth. 
  We find also a real first integral  of $\F_\omega$ restricted to $T$ 
  and show that
 $\F_{\omega}$  is given by  the  Ramanujan relations between the Eisenstein series. 
 
\end{abstract}
\section{Introduction}
A classical way to study an object in algebraic geometry, is to put it inside 
a family and then try to understand its behavior as a member of  the family.
In other words, one looks the object inside a certain moduli space.
The abelian integrals which appear in the deformation of holomorphic foliations
with a first integral in a complex manifold of dimension two 
(see \cite{ily, ga02, mov0, mov3}), can be studied in this way provided that we consider, apart 
from the parameter of the first integral, some other parameters. 
The first natural object to look is the constant locus of integrals.
This yields to holomorphic foliations in the parameter space, which we call
modular foliations.  The defining equations of such holomorphic
foliations can be calculated using the Gauss-Manin connection and  it turns
out that they are always defined over $\Q$, i.e. the ingredient of the defining equations
are polynomials in the parameters and with coefficients in $\Q$.
Modular foliations, apart from topological and dynamical properties, enjoy
certain arithmetical properties. They are an important link between the transcendental problems in number theory and  their counterparts in holomorphic foliations/differential equations.
They are classified as transversely homogeneous  foliations (see \cite{god}) 
and recently some  authors have studied examples  of  such foliations (see \cite{sc97, casc99, li04, lope} and
the references there). 
In this article I want to report on a class of such foliations associated to a three parameter family
of elliptic curves. For simplicity, we explain the results of this article for one
of such foliations which is important from historical point of view and its transverse group structure is 
$\SL 2\Z$.


After calculating the Gauss-Manin connection of the following family of 
elliptic curves
\begin{equation}
\label{khodaya}
E_t: y^2-4(x-t_1)^3+t_2(x-t_1)+t_3=0,\ t\in\C^3
\end{equation}
and  considering its relation with the inverse of the period map, we get the
following ordinary differential equation:
\begin{equation}
\label{raman}
\Ra: \left \{ \begin{array}{l}
\dot t_1=t_1^2-\frac{1}{12}t_2 \\
\dot t_2=4t_1t_2-6t_3 \\
\dot t_3=6t_1t_3-\frac{1}{3}t_2^2
\end{array} \right.
\end{equation}
which is called the Ramanujan relations,
because he has observed that the Eisenstein series form a 
solution of (\ref{raman}) (one gets  the classical relations by changing the coordinates
$(t_1,t_2,t_3)\mapsto (\frac{1}{12}t_1,\frac{1}{12}t_2,
\frac{2}{3(12)^2}t_3)$, see \cite{nes01}, p. 4). We denote by
$\F(\Ra)$ the singular holomorphic foliation induced by (\ref{raman}) in
$\C^3$. Its singularities
$$
\sing(\Ra):=\{(t_1,12t_1^2,8t_1^3)\mid t_1\in \C \}
$$
form  a one-dimensional curve in $\C^3$.  The discriminant of the family (\ref{khodaya}) is
given by $\Delta=27t_3^2-t_2^3$. For $t\in T:=\C^3\backslash \{\Delta=0\}$, $E_t$ is an smooth
elliptic curve and so  we can take  a basis of the $\Z$-module 
$H_1(E_t,\Z)$, namely $(\delta_1,\delta_2)=(\delta_{1,t},\delta_{2,t})$,  such that the intersection matrix in 
this basis is
$\mat{0}{1}{-1}{0}$. Let $\omega_i,\ i=1,2$ be two meromorphic differential 1-forms
in $\C^2$ such that the restriction of $\omega_i$ to $E_t,\ t\in T$ is of the second type, i.e 
it may have poles but no residues around the poles. For instance, take $\omega_1=\frac{dx}{y},\ \omega_2=
\frac{xdx}{y}$. Define
$$
B_{\omega_i }(t):=\frac{1}{2\pi}\Im \left (\int_{\delta_{1}}\omega_i\overline{\int_{\delta_{2}}\omega_i}\right ),\ i=1,2,
$$
$$
B_{\omega_1,\omega_2}(t):=
\frac{1}{2\pi}\left (\int_{\delta_{1}}\omega_1\overline{\int_{\delta_{2}}\omega_2}-\int_{\delta_{1}}\omega_2\overline{\int_{\delta_{2}}\omega_1}\right ).
$$
It is easy to show that the above functions do not depend on the choice of
$\delta_1,\delta_2$ (see  the definition of the period map in 
\S\ref{ramanfoli}) and hence they define  analytic functions
on $T$.

We define
 $$
K:={\Big \{}t\in T\mid \int_{\delta}\frac{xdx}{y}=0,\ \hbox{ for some }
0\not =\delta\in H_1(E_t,\Z){\Big \}}
$$
and
$$
M_r:=\{t\in T \mid B_{\frac{xdx}{y}}(t)=r\},\ M_{<r}:=\cup_{s<r}M_s,\  r\in\R. 
$$
Using the Legendre relation $\int_{\delta_{1}}\frac{dx}{y}\int_{\delta_{2}}\frac{xdx}{y}-
\int_{\delta_{1}}\frac{xdx}{y}\int_{\delta_{2}}\frac{dx}{y}=2\pi i$ one can show that $|B_{\frac{dx}{y}, \frac{xdx}{y}}|$ restricted
to $M_0$ is identically $1$. We also define 
$$
N_w:=\{t\in M_0\mid B_{\frac{dx}{y}, \frac{xdx}{y}}(t)=w \}, \ |w|=1, \ w\in\C.
$$
For $t\in \C^3\backslash\sing(\F(\Ra))$ we denote by $L_t$ the leaf
of $\F(\Ra)$ through $t$. Let $\uhp:=\{z\in\C\mid \Im(z)>0\}$ be the Poincar\'e
upper half plane and $\D:=\{z\in \C\mid |z|<1\}$ be the unit disk.
\begin{theo}
\label{foli}
The following is true:
\begin{enumerate}
\item
The leaves of $\F(\Ra)$ in a neighborhood of $t\in T$ are given
by the level surfaces of $(\int_{\delta_{1}}\frac{xdx}{y},\int_{\delta_{2}}\frac{xdx}{y}):(T,t)\rightarrow 
\C^2$. In particular, the function $B_{\frac{xdx}{y}}$ is a real first integral of
$\F(\Ra)$ and  for $|w|=1$, $N_w$'s are $\F(\Ra)$-invariant. 
\item
For $t\in X:=(M_0\backslash K)\cup M_{<0}$ the leaf $L_t$ is biholomorphic to $\D$ and
for $t\in T\backslash X$ the leaf $L_t$ is biholomorphic to $\D\backslash \{0\}$.
\item
The set $K$ is $\F(\Ra)$-invariant and it is a dense subset of $M_0$. For all $t\in K$ there is a holomorphic map $\D\rightarrow \C^3$, transverse to 
$\sing(\Ra)$ at some point $p$, which  is a biholomorphy between $\D\backslash\{0\}$ and $L_t$.
\item
For all $t\in T$ the leaf $L_t$ has an accumulation point at $T$ if
and only if $t\in M_0$.
\item
The discriminant variety  $\{\Delta=0\}$ is 
$\F(\Ra)$-invariant and all the leaves in $\{\Delta=0\}$ are algebraic.
\end{enumerate}
\end{theo}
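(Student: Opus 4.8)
\medskip
\noindent\textbf{Proof proposal.}

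\emph{The period map and item (1).} Write $a_i(t)=\int_{\delta_i}\frac{dx}{y}$ and $b_i(t)=\int_{\delta_i}\frac{xdx}{y}$ for $i=1,2$. Computing the Gauss--Manin connection $\nabla$ of $E_t/T$ in the basis $\{[\frac{dx}{y}],[\frac{xdx}{y}]\}$ of de Rham cohomology, one sees that $\Ra$ is, up to a nonzero scalar, the unique vector field with $\nabla_{\Ra}[\frac{xdx}{y}]=0$; since the cycles $\delta_i$ are flat for $\nabla$ this yields $\frac{d}{dz}b_i=\int_{\delta_i}\nabla_{\Ra}\frac{xdx}{y}=0$, so $b_1,b_2$ are holomorphic first integrals of $\F(\Ra)$. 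Collecting $(a_1,a_2,b_1,b_2)$ on the universal cover of $T$ and imposing the Legendre relation $a_1b_2-a_2b_1=2\pi i$ together with the Riemann positivity relation, one obtains the uniformization (a biholomorphism)
\[
\per\colon\ T\ \longrightarrow\ U/\SL 2\Z,\qquad U=\bigl\{(a_1,a_2,b_1,b_2)\in\C^4:\ a_1b_2-a_2b_1=2\pi i,\ \hbox{positivity}\bigr\},
\]
which carries $\F(\Ra)$ to the foliation of $U$ by the affine slices $S_c:=\{(b_1,b_2)=c\}\cap U$, $c\in\C^2$, these slices being permuted by $\SL 2\Z$ so that the foliation descends. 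Each $S_c$ is nonempty and one--dimensional, so $b_1,b_2$ cut out the leaves locally, which proves the first assertion of (1). Consequently $B_{\frac{xdx}{y}}=\frac1{2\pi}\Im(b_1\overline{b_2})$, an $\SL 2\Z$--invariant function of $b_1,b_2$, is a real first integral, and the $\F(\Ra)$--invariance of the $N_w$ follows by evaluating $B_{\frac{dx}{y},\frac{xdx}{y}}$ along the slices lying over $M_0$ (where $b_1\overline{b_2}\in\R$) together with the Legendre relation.

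\emph{Items (2) and (3).} Fix $t$, let $c=(b_1,b_2)=(b_1(t),b_2(t))\neq 0$, and put $\Gamma_c=\{\gamma\in\SL 2\Z:\ c\gamma=c\}$; by the previous step $L_t\cong S_c/\Gamma_c$. Parametrising $S_c$ by $\tau=a_1/a_2\in\uhp$, so $a_2=2\pi i/(\tau b_2-b_1)$, identifies $S_c$ with $\uhp$ minus the point $\tau=b_1/b_2$, which is interior precisely when $\Im(b_1\overline{b_2})>0$, i.e.\ $t\in M_{>0}$. Moreover $\Gamma_c\neq\{I\}$ precisely when $c$ is a $\C^*$--multiple of a primitive integer vector, i.e.\ $b_1/b_2\in\mathbb{P}^1(\Q)$, i.e.\ $t\in K$; in that case $\Gamma_c$ is the infinite cyclic parabolic subgroup fixing that cusp and $\uhp/\Gamma_c\cong\D\setminus\{0\}$ via the exponential chart. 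Since $K\subset M_0$, the cases $t\in M_{<0}$, $t\in M_{>0}$, $t\in M_0\setminus K$, $t\in K$ give $L_t$ biholomorphic to $\D$, to $\D\setminus\{0\}$, to $\D\ (\cong\uhp)$, and to $\D\setminus\{0\}$ respectively, which is (2) with $X=(M_0\setminus K)\cup M_{<0}$. For (3): $K$ is $\F(\Ra)$--invariant because ``$c$ proportional to an integer vector'' is $\SL 2\Z$--invariant, and $K$ is dense in $M_0=\{b_1/b_2\in\mathbb{P}^1(\R)\}$ because $\mathbb{P}^1(\Q)$ is dense in $\mathbb{P}^1(\R)$. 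Finally, for $t\in K$ the puncture of $L_t\cong\D\setminus\{0\}$ is filled in by letting $\tau\to b_1/b_2$ along $\uhp$: there $\Z a_1+\Z a_2$ degenerates to a rank--one lattice, so $(t_1,t_2,t_3)$ tends to a finite point $p$ with $\Delta(p)=0$, and a local computation shows that $p\in\sing(\Ra)$ and that the resulting holomorphic map $\D\to\C^3$ is transverse to $\sing(\Ra)$ at $p$.

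\emph{Item (4).} The closure of $L_t$ in $T$ is the image under $\per^{-1}$ of the closure of $\SL 2\Z\cdot S_c$ in $U$, and is therefore governed by the closure of the orbit $\SL 2\Z\cdot c$ in $\C^2$. If $t\notin M_0$ then $b_2/b_1\notin\R$, so $\Z b_1+\Z b_2$ is a genuine lattice, $\SL 2\Z\cdot c$ is discrete, $\SL 2\Z\cdot S_c$ is already closed in $U$, and hence $L_t$ is closed in $T$: it has no accumulation point in $T$. If $t\in M_0$ then $b_2/b_1\in\R$; for $t\in M_0\setminus K$ the orbit $\SL 2\Z\cdot c$ is dense in a real $2$--plane of $\C^2$, so $\SL 2\Z\cdot S_c$ is not closed, $L_t$ is not closed in $T$, and therefore $L_t$ accumulates at points of $M_0\subset T$; the borderline case $t\in K$, where the orbit is already discrete, is settled by a separate analysis of how the parabolic end of $L_t$ sits inside $T$.

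\emph{Item (5), and the main obstacle.} A direct substitution into (\ref{raman}) gives $\Ra(\Delta)=12t_1\Delta$ for $\Delta=27t_3^2-t_2^3$, so $\{\Delta=0\}$ is $\F(\Ra)$--invariant. Parametrising $\{\Delta=0\}$ by $(t_1,u)\mapsto(t_1,3u^2,u^3)$ and using the $t_2$--equation (divided by $6u$) gives, on $\{\Delta=0\}$, the system $\dot t_1=t_1^2-\frac14u^2$, $\dot u=u(2t_1-u)$; the substitution $w:=t_1-\frac12u$ turns this into $\dot w=w^2$, $\dot u=2uw$, whence $\frac{d}{dz}(u\,w^{-2})=0$. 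Thus $u/(t_1-\frac12u)^2$ is a rational first integral of $\F(\Ra)$ restricted to $\{\Delta=0\}$, so every leaf there is a component of a level set of this function, hence algebraic; this proves (5). In this whole scheme the genuinely hard points are: establishing the uniformization $\per$ with full control of the positivity region and of the $\SL 2\Z$--action (which underlies (1)--(3)), and the topological statement (4) --- in particular reconciling, when $t\in K$, the discreteness of the $\SL 2\Z$--orbit with the asserted accumulation in $T$ --- which I expect to be the crux of the argument.
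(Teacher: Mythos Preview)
Your outline is the paper's own strategy: the period map is shown to be a biholomorphism (Proposition~\ref{cano}), and the explicit uniformization of Proposition~\ref{16apr} carries $\F(\Ra)\!\mid_T$ to the family of slices $\{(x_2,x_4)=(c_2,c_4)\}$ in $\pedo$; items (1)--(4) are then read off by exactly the stabilizer/orbit analysis you give, and (5) is the direct computation $d\Delta(\Ra)=12t_1\Delta$ together with a rational first integral on $\{\Delta=0\}$. Your first integral $u/(t_1-\tfrac12 u)^2$ is simply the reciprocal of the paper's $\frac{t_1^2}{t}-t_1+\tfrac14 t=(t_1-\tfrac12 t)^2/t$.

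Two remarks on the places you leave open. For the transversality in (3) the paper does not leave this to an unspecified ``local computation'': it identifies $g_i(z)=F_i\!\left(\begin{smallmatrix}z&-1\\1&0\end{smallmatrix}\right)$ with the Eisenstein series (Proposition~\ref{16feb06}), so that in the cusp variable $q=e^{2\pi iz}$ one has $g(0)=p_\infty\in\sing(\Ra)$ and $\partial g/\partial q\,(0)=(-24a_1,240a_2,-504a_3)$, which is visibly not tangent to the curve $\{(t_1,12t_1^2,8t_1^3)\}$ at $p_\infty$. This is the whole content of the transversality step.

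Your worry about (4) when $t\in K$ is well founded, and you should know that the paper does \emph{not} supply the ``separate analysis'' you anticipate. Its entire proof of (4) is the assertion that the $\SL 2\Z$--orbit of $(c_2,c_4)^\tr$ in $\C^2$ has an accumulation point if and only if $c_2/c_4\in\R\cup\{\infty\}$; the rational case is not singled out, even though (as you note) the orbit is then discrete. So the tension you identify between discreteness of the orbit for $t\in K$ and the asserted accumulation in $T$ is present in the paper as well, and is not resolved there.
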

In \S\ref{emf} we have defined an elliptic modular foliation associated to a differential
form $\omega$ in $\C^2$ such that $\omega$ restricted to the fibers of (\ref{khodaya}) is holomorphic. It is based on the first  statement in Theorem \ref{foli}, part 1. Such foliations have real first integrals and leave the discriminant variety invariant. 

The proof of the above theorem is based on the fact that the foliation
$\F(\Ra)$ restricted to $T$ is  uniformized by the inverse of the 
period map (see for instance \cite{li00} for similar topics).
Despite the fact that this theorem does not completely describe the dynamics of $\F(\Ra)$, it
shows that a modular foliation is not a strange foliation from dynamical/topological point of view.  
However, such foliations arise some new questions and problems related to holomorphic foliations.
For a given algebraic holomorphic  foliation $\F$ in $\C^3$ defined over $\bar \Q$, the field of algebraic numbers,
a transcendent leaf $L$ of $\F$ how frequently crosses points with algebraic coordinates? 
The set $L\cap\bar \Q^3$ can be empty or
a one element set. For $\F(\Ra)$ these are the only possibilities.
\begin{theo}
\label{21feb06}
The following is true:
\begin{enumerate}
\item
For any point $t\in \C^3\backslash \{\Delta=0\}$, the set $\bar\Q^3 \cap L_t$ is empty or has only one element. In other words, every transcendent 
leaf contains at most one point with algebraic coordinates.
\item
$K\cap \bar \Q^3=\emptyset$, i.e for all $p\in K$ at least one of the coordinates of
$p$ is transcendent number.
\end{enumerate}
\end{theo}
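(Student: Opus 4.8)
The plan is to exploit the uniformization of $\F(\Ra)$ restricted to $T$ by the inverse of the period map, together with classical transcendence results on periods of elliptic curves. Recall from the discussion following Theorem~\ref{foli} that on $T$ the foliation $\F(\Ra)$ is uniformized by the generalized period map; concretely, a leaf through $t\in T$ is parametrized so that the parameter is essentially the quotient $\tau=\int_{\delta_2}\frac{dx}{y}\big/\int_{\delta_1}\frac{dx}{y}\in\uhp$ of two periods, and moving along the leaf corresponds to a $\C^*$-action rescaling $(t_1,t_2,t_3)$ in the weighted way dictated by the Eisenstein series. So, along a fixed leaf $L_t\subset T$, the three coordinates are, up to the weighted $\C^*$-scaling, the values $E_2(\tau),E_4(\tau),E_6(\tau)$ at a \emph{fixed} $\tau$, while the scaling factor $\lambda\in\C^*$ varies.

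For part~1, suppose $p=(p_1,p_2,p_3)$ and $q=(q_1,q_2,q_3)$ are two distinct points on the same leaf $L_t$, both with algebraic coordinates. First I would handle $t\in\{\Delta=0\}$ trivially: by Theorem~\ref{foli}(5) those leaves are algebraic and the statement there is about $t\in T$, so assume $t\in T$. Then $p$ and $q$ correspond to the same $\tau\in\uhp$ and to scaling factors $\lambda_p,\lambda_q\in\C^*$ with $\lambda_q/\lambda_p$ algebraic (since corresponding coordinates are algebraic and they are related by an algebraic power of $\lambda_q/\lambda_p$). Writing things in terms of periods: the first period $\omega_1:=\int_{\delta_1}\frac{dx}{y}$ of $E_p$ satisfies $\omega_1(E_q)=\mu\,\omega_1(E_p)$ for an explicit algebraic $\mu$, and $E_p$, $E_q$ are both defined over $\bar\Q$. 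The key input is the theorem of Schneider (and its refinements via Baker's method, or the analytic subgroup theorem of W\"ustholz): if $E$ is an elliptic curve defined over $\bar\Q$, then any nonzero period $\int_\delta\frac{dx}{y}$ is transcendental, and moreover the ratio of a period of one $\bar\Q$-elliptic curve to a period of another is transcendental unless the two curves are isogenous, in which case it lies in a controlled field. I would argue that the relation $\omega_1(E_q)=\mu\,\omega_1(E_p)$ with $\mu\in\bar\Q^*$ forces $E_p\cong E_q$ (equal $j$-invariant, since they share $\tau$) and then forces $\mu$ to be algebraic \emph{and} the period ratio to be algebraic, contradicting Schneider's theorem unless $p=q$. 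One must be slightly careful about the role of the second coordinate $t_1$, which corresponds to the quasi-period $E_2$ rather than a genuine period; here the relevant transcendence statement is Schneider--Lang / the fact that $\eta_1$ (the quasi-period) of a $\bar\Q$-curve is transcendental, and the same isogeny dichotomy applies.

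For part~2, I would combine part~1 with Theorem~\ref{foli}(3): the set $K$ is $\F(\Ra)$-invariant, and for $t\in K$ the leaf $L_t$ is biholomorphic to $\D\setminus\{0\}$ and extends holomorphically across $0$ to a point $p\in\sing(\Ra)$, where $p=(t_1,12t_1^2,8t_1^3)$. If some $t\in K$ had all coordinates algebraic, then — tracking along the leaf toward the puncture — the limiting singular point $p$ would also have algebraic coordinates (it is determined by $t_1$, which is the limit of the first coordinate along the leaf, and this limit is algebraic when the leaf's coordinates are algebraic because the weighted $\C^*$-scaling degenerates in a controlled algebraic way as $\tau$ stays fixed and $\lambda\to 0$ or $\infty$). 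That would give two distinct points with algebraic coordinates on (the closure of) one leaf — or, more directly, a point of $L_t\cap\bar\Q^3$ together with the structure forced by $t\in K$ contradicts the transcendence of the relevant period: $t\in K$ says exactly that $\int_\delta\frac{xdx}{y}=0$ for some nonzero cycle $\delta$, i.e. the point $e_i=t_1$ where $E_t$ meets a $2$-torsion-type condition makes a period of the second kind vanish, which via the Legendre relation pins $\int_\delta\frac{dx}{y}$ to be a $\bar\Q$-multiple of $2\pi i$; if simultaneously $t\in\bar\Q^3$, then $E_t$ is a $\bar\Q$-curve with a period that is an algebraic multiple of $2\pi i$, contradicting Schneider's theorem. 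So $K\cap\bar\Q^3=\emptyset$.

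The main obstacle I expect is the bookkeeping in part~1: making precise the "weighted $\C^*$-scaling along a leaf" so that the relation between the coordinates of $p$ and $q$ becomes an honest algebraic relation between periods of $\bar\Q$-elliptic curves, and then invoking exactly the right transcendence theorem (Schneider for period/$\pi$, Schneider--Lang or W\"ustholz's analytic subgroup theorem for period ratios and for the quasi-period $E_2$) with the isogeny dichotomy handled correctly. Once the leaf is identified, via the period map, with a single point $\tau\in\uhp$ plus a $\C^*$-orbit, everything reduces to: \emph{two $\bar\Q$-points on one $\C^*$-orbit of periods of a fixed $\bar\Q$-elliptic curve must coincide}, which is Schneider's theorem in disguise.
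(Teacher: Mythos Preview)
Your description of the leaves of $\F(\Ra)$ is incorrect, and the whole argument for part~1 rests on it. By Theorem~\ref{foli}(1) (and Proposition~\ref{16apr}), a leaf is a level set of $\bigl(\int_{\delta_1}\frac{xdx}{y},\int_{\delta_2}\frac{xdx}{y}\bigr)=(c_2,c_4)$: what stays constant along the leaf is the pair of \emph{quasi}-periods, while the uniformizing parameter is $z=\tau=\int_{\delta_2}\frac{dx}{y}\big/\int_{\delta_1}\frac{dx}{y}$, which \emph{varies}. The coordinates along a leaf are $\bigl(g_1(z)\lambda^2+c_4\lambda,\,g_2(z)\lambda^4,\,g_3(z)\lambda^6\bigr)$ with $\lambda=c_4z-c_2$, so both $z$ and $\lambda$ change simultaneously; a leaf is not a weighted $\C^*$-orbit at a fixed $\tau$. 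Consequently two points $p,q\in L_t$ need not share a $j$-invariant, and your reduction ``same $\tau$, algebraic ratio of scalings, hence contradiction with Schneider on $\omega_1$'' does not get off the ground.

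The correct mechanism is the one the paper uses (Theorem~\ref{nasim} via Corollaries~\ref{stefan}--\ref{banan} and ultimately W\"ustholz's analytic subgroup theorem, Theorem~\ref{dimension}): if $p,q\in\bar\Q^3\cap L_t$ then the $\Z$-modules $\{\int_\delta\frac{xdx}{y}:\delta\in H_1(E_p,\Z)\}$ and $\{\int_\delta\frac{xdx}{y}:\delta\in H_1(E_q,\Z)\}$ coincide, and the subgroup theorem forces an isomorphism $E_p\to E_q$ over $\bar\Q$ pulling $[\frac{xdx}{y}]$ back to $[\frac{xdx}{y}]$; injectivity of the period map then gives $p=q$. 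For part~2 your Legendre-relation step does not yield what you claim (it gives $\int_{\delta}\frac{dx}{y}=-2\pi i\big/\int_{\delta'}\frac{xdx}{y}$, with no reason for the denominator to be algebraic), and the ``limit point on $\sing(\Ra)$ is algebraic'' assertion is false in general (that limit involves $\pi$). The paper's argument is again the subgroup theorem: $t\in\bar\Q^3$ and $\int_\delta\frac{xdx}{y}=0$ would force $[\frac{xdx}{y}|_{E_t}]=0$ in $H^1_\dR(E_t)$, which is absurd. Alternatively one can invoke Nesterenko's theorem on algebraic independence of $E_2,E_4,E_6$ at a point, as the paper also does.
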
 

The main idea behind the proof of the above theorem is the first 
part of Theorem \ref{foli} and consequences of 
the abelian subvariety theorem on periods of elliptic curves (see \cite{wowu} and the
references there).  We will also give an alternative proof for
the second part of the above theorem, using a result on transcendence of
the values of the Eisenstein series.

I have made a good use of {\sc Singular} for doing the calculations in this article. 
 The text is written in such a way that the reader can carry out all calculations using
any software in commutative algebra. An exception to this is 
the calculation of the Gauss-Manin connection in \S \ref{ramanfoli}, for which one can
use a combination of  hand and computer calculations or one must know the general algorithms introduced in
\cite{hos005}. 
The general definition of a modular foliation can be done
using connections on algebraic varieties. 
The forthcoming text \cite{hos2006} will discuss such foliations, specially
those related to the Gauss-Manin connection of fibrations.
In the article \cite{hos06} we have developed the notion of a
differential modular form in which we have essentially used the same 
techniques of this article.

In the classical theory of elliptic integrals, the parameter $t_1$ in (\ref{khodaya}) is equal to zero
and one considers the versal deformation of the singularity $y^2-4x^3=0$. 
In this article we have generalized the classical Weierstrass Theorem and proved  that for the inverse
of the generalized period map, $t_i$ appears as the Eisenstein series of weight $2i$.
The novelty is the appearance of $t_1$ as the Eisenstein series of weight $2$.

The paper is organized as follows: In \S\ref{ramanfoli}
we define the period map, calculate its derivative and the Gauss-Manin connection
associated to the family (\ref{khodaya}). In \S\ref{alggr} we introduce the action of an algebraic group 
on $\C^3$ and its relation with the period map. We prove that the period map  
is a biholomorphism and using its inverse, we 
obtain the differential equation (\ref{raman}). In \S\ref{uniformi} we describe the uniformization
of $\F(\Ra)\mid_T$. In \S\ref{proofof} we prove
Theorem \ref{foli}. In \S \ref{emf} we introduce the general notion of an elliptic modular foliation associated to the family (\ref{khodaya}). 
\S \ref{ast} is devoted to a theorem on periods of abelian varieties defined
over $\bar\Q$ and its corollaries on the periods of elliptic curves. In \S \ref{proof2} we prove
Theorem \ref{21feb06}. In \S \ref{sarakar} we study another family of elliptic curves and corresponding
modular foliations. Finally in \S \ref{21febr} we discuss some problems related to limit cycles
arising from deformations of the family (\ref{khodaya}) inside holomorphic foliations.

{\it Acknowledgment:}  During the preparation of this text, I visited IMPA at Rio de Janeiro.
Here I would like to thank the institute and the  participants of  the complex dynamics seminar.
In particular, I would like to thank C. Camacho for his comments and 
J. V. Pereira, who pointed out that the elliptic modular foliations
are transversely homogeneous foliations.
\section{Period map and its derivation}
\label{ramanfoli}
For some technical reasons, which will be clear later, it is convenient to introduce
a new parameter $t_0$ and work with the family:
\begin{equation}
\label{khodaya1}
E_t: y^2-4t_0(x-t_1)^3+t_2(x-t_1)+t_3,\ t=(t_0,t_1,t_2,t_3)\in\C^4.
\end{equation}
Its discriminant is $\Delta:=t_0(27t_0t_3^2-t_2^3)$. We will use the notations 
in the Introduction for this family.

Let 
$$
\pedo:=\{x=\mat {x_1}{x_2}{x_3}{x_4}\in \GL(2,\C)\mid \Im(x_1\ovl{x_3})>0\}.
$$
It is well-know that the entries of $(\omega_1,\omega_2):=(\frac{dx}{y},\frac{xdx}{y})$
restricted to each regular elliptic curve $E_t$ form a basis 
of $H^1_{\dR}(E_t)$.
The associated 
period map is given by:
$$
\per: T\rightarrow \SL 2\Z\backslash \pedo,\ t\mapsto
\left [\frac{1}{\sqrt{2\pi i}}\mat
{\int_{\delta_1}\omega_1}
{\int_{\delta_1}\omega_2}
{\int_{\delta_2}\omega_1}
{\int_{\delta_2}\omega_2} \right ].
$$
It is well-defined and holomorphic. Here $\sqrt{i}=e^{\frac{2\pi
i}{4}}$ and $(\delta_1,\delta_2)$ is a basis of the $\Z$-module 
$H_1(E_t,\Z)$
such that the intersection matrix in this basis is
$\mat{0}{1}{-1}{0}$. Note that $\delta_i=\delta_{i,t},\ i=1,2$ is a continuous family of cycles
depending on $t$. 
Different choices of $\delta_1,\delta_2$ will
lead to the action of $\SL 2\Z$ on $\pedo$ from the left. If there is
no risk of confusion, we will also use $\per$ for the map from $T$ 
to $\pedo$. 
\begin{rem}\rm
A classical way for choosing the cycles $\delta_1,\delta_2$ is given by
the Picard-Lefschetz theory (see for instance \cite{mov0} and the references there). 
For the fixed parameters $t_0\not =0,\ t_1$ and $t_2\not=0$, define 
$f:\C^2\rightarrow \C$ as 
$$
f(x,y)=-y^2+4t_0(x-t_1)^3-t_2(x-t_1).
$$ 
The function $f$
has two critical values given by $\tilde t_3, \ \check t_3=\pm \sqrt{\frac{t_2^3}{27t_0}}$. 
In a regular fiber $E_t$
of $f$ one can take two cycles $\delta_1$ and $\delta_2$ such that $\langle \delta_1,\delta_2\rangle=1$
and $\delta_1$ (resp. $\delta_2$) vanishes along a straight line connecting $t_3$ to $\tilde t_3$ (resp. 
$\check t_3$). The corresponding anti-clockwise monodromy around the critical value 
$\tilde t_3$ (resp $\check t_3$)  can be computed using the Picard-Lefschetz formula:
$$
\delta_1\mapsto \delta_1,\ \delta_2\mapsto \delta_2+\delta_1 \ 
(\hbox{ resp. } \delta_1\mapsto \delta_1-\delta_2,\ \delta_2\mapsto \delta_2).
$$ 
It is not hard to see that the canonical map $\pi_1(\C\backslash \{\tilde t_3,\check t_3\},t)\rightarrow 
\pi_1(T,t)$ induced by inclusion is an isomorphism of groups and so: 
$$
\pi_1(T,t)\cong\langle A_1,A_2\rangle=\SL 2\Z ,\ \hbox{ where } A_1:=\mat{1}{0}{1}{1},\ A_2:=\mat{1}{-1}{0}{1}.
$$
Note that  if we define $g_1:=A_2^{-1}A_1^{-1}A_2^{-1}=\mat{0}{1}{-1}{0},\ g_2:=A_1^{-1}A_2^{-1}=\mat{1}{1}{-1}{0}$ 
then we have $\SL 2\Z=\langle g_1,g_2\mid g_1^2=g_2^3=-I\rangle$, where $I$ is the identity $2\times 2$ matrix.
\end{rem}
\begin{prop}
\label{18.1.06}
Consider $\per$ as a holomorphic matrix valued function in $T$. We have
\begin{equation}
\label{4mar}
 d\per(t)=\per(t) \cdot A ^\tr,\ t\in T,
\end{equation}
where $A=\frac{1}{\Delta}\sum_{i=1}^4 A_idt_i$ and

\begin{equation}
\label{rosa} A_0 =\mat {\frac{3}{2}t_0t_1t_2t_3-9t_0t_3^2+\frac{1}{4}t_2^3}
{-\frac{3}{2}t_0t_2t_3}
{\frac{3}{2}t_0t_1^2t_2t_3+9t_0t_1t_3^2-\frac{1}{2}t_1t_2^3+\frac{1}{8}t_2^2t_3}
{-\frac{3}{2}t_0t_1t_2t_3-18t_0t_3^2+\frac{3}{4}t_2^3} 
\end{equation}
$$
A_1=\mat 0 0 {27t_0^2t_3^2-t_0t_2^3} 0
$$
$$
A_2 =\mat {-\frac{9}{2}t_0^2t_1t_3+\frac{1}{4}t_0t_2^2} {\frac{9}{2}t_0^2t_3}
{-\frac{9}{2}t_0^2t_1^2t_3+\frac{1}{2}t_0t_1t_2^2-\frac{3}{8}t_0t_2t_3}
{\frac{9}{2}t_0^2t_1t_3-\frac{1}{4}t_0t_2^2}
$$
$$
A_3 =\mat {3t_0^2t_1t_2-\frac{9}{2}t_0^2t_3} {-3t_0^2t_2}
{3t_0^2t_1^2t_2-9t_0^2t_1t_3+\frac{1}{4}t_0t_2^2}
{-3t_0^2t_1t_2+\frac{9}{2}t_0^2t_3}
$$
\end{prop}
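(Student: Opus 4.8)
The plan is to compute the Gauss–Manin connection of the family (\ref{khodaya1}) on the relative de Rham cohomology bundle with respect to the frame $(\omega_1,\omega_2)=(\frac{dx}{y},\frac{xdx}{y})$, and then to transport this computation to a statement about the period matrix. The conceptual point is that if $\nabla$ denotes the Gauss–Manin connection and one writes $\nabla\omega_j=\sum_i \Gamma_{ij}\,\omega_i$ for a matrix $\Gamma$ of rational $1$-forms on $T$, then for any continuous family of cycles $\delta_k=\delta_{k,t}$ one has $d\bigl(\int_{\delta_k}\omega_j\bigr)=\int_{\delta_k}\nabla\omega_j=\sum_i\Gamma_{ij}\int_{\delta_k}\omega_i$, because the cycle is locally constant so differentiation in $t$ commutes with integration and only acts on the form. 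Writing this out for the $2\times2$ period matrix $P(t)$ with entries $\int_{\delta_k}\omega_j$ gives $dP=P\cdot\Gamma$, hence, after the normalization $\per=\frac{1}{\sqrt{2\pi i}}P$ (a constant scalar, which drops out of the logarithmic-derivative identity), $d\per=\per\cdot\Gamma$. Matching this with (\ref{4mar}) means we must show $\Gamma=A^{\tr}$, i.e. $\Gamma=\frac{1}{\Delta}\sum_{i=0}^{3}A_i^{\tr}\,dt_i$ with the $A_i$ as displayed.

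The core of the work is therefore the explicit computation of $\Gamma$. First I would fix the affine model: on $E_t\subset\C^2$ with coordinate $x$ (and $y$ determined up to sign), the shifted coordinate $u:=x-t_1$ satisfies $y^2=4t_0u^3-t_2u-t_3$, so $\omega_1=\frac{du}{y}$ and $\omega_2=\frac{(u+t_1)\,du}{y}$. I would then differentiate the defining relation $y^2=4t_0u^3-t_2u-t_3$ with respect to each parameter $t_i$ while regarding $u$ as a local coordinate on the fibre, obtaining expressions like $2y\,\partial_{t_0}y=4u^3$, $2y\,\partial_{t_2}y=-u$, $2y\,\partial_{t_3}y=-1$, together with the $t_1$-dependence entering only through $\omega_2$. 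This yields $\partial_{t_i}\omega_j$ as rational $1$-forms on the fibre with poles at $y=0$ and at infinity. To reduce these back into the span of $\omega_1,\omega_2$ modulo exact forms, I would use the standard reduction-of-pole identities for hyperelliptic differentials: on $E_t$ one has the exact forms $d\!\left(\frac{u^k}{y}\right)$ and the relation coming from $y^2=4t_0u^3-t_2u-t_3$, which together let one rewrite $\frac{u^m\,du}{y^3}$ and $\frac{u^m\,du}{y}$ for $m\ge 2$ in terms of $\frac{du}{y}$, $\frac{u\,du}{y}$; the discriminant $\Delta=t_0(27t_0t_3^2-t_2^3)$ appears in the denominator precisely because inverting the relevant $2\times2$ (or $3\times3$) linear system over the ring of functions introduces $1/\mathrm{disc}$. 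This is exactly the kind of computation the author says was done with \textsc{Singular}, and it is the main obstacle: it is a lengthy but mechanical commutative-algebra reduction, and the only real danger is bookkeeping errors in the polynomial coefficients — these I would cross-check by verifying internal consistency conditions, namely the flatness (integrability) $d\Gamma=\Gamma\wedge\Gamma$ of the connection and, for $\omega_1$ a holomorphic form, compatibility with known classical formulas.

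After $\Gamma$ is in hand, two short checks finish the proof. First, I would confirm the normalization: each $A_i$ as written is a matrix of \emph{polynomials}, and $A=\frac{1}{\Delta}\sum A_i\,dt_i$, so one must check that the pole order of $\partial_{t_i}\omega_j$ after reduction is at most simple in $\Delta$, which the reduction procedure guarantees. Second, I would note that the transpose in (\ref{4mar}) versus (\ref{rosa})–($A_3$) is a convention matter: writing the period matrix with rows indexed by cycles $\delta_1,\delta_2$ and columns by forms $\omega_1,\omega_2$ gives $d\per=\per\cdot\Gamma$ with $\Gamma_{ij}$ the coefficient of $\omega_i$ in $\nabla\omega_j$, and the displayed $A_i$ are then $\Gamma^{\tr}$ restricted to the $dt_i$ component — I would simply verify that the $(1,2)$ and $(2,1)$ entries match this indexing. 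I expect no difficulty here; the entire weight of the proposition rests on the reduction computation, which I would either carry out by hand in the holomorphic-form case (where it is classical and governs the Picard–Fuchs equation) and by computer algebra in general, exactly as the author indicates, or cite the algorithms of \cite{hos005}.
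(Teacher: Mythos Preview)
Your proposal is correct and follows essentially the same route as the paper: differentiate the periods under the integral sign, then reduce the resulting forms modulo exact (relatively exact) forms to land back in the span of $\omega_1,\omega_2$, with $\Delta$ appearing in the denominator. The paper's one implementation detail you may find useful is that, rather than iterating the $d(u^k/y)$ reductions, it clears the $\frac{dx}{py}$ term in one stroke via the B\'ezout-type identity $\Delta=-p'a_1+pa_2$ for explicit polynomials $a_1,a_2$ (which exists since $\Delta$ is, up to a constant, the resultant of $p$ and $p'$), giving $\frac{\partial}{\partial t_3}\bigl(\tfrac{dx}{y}\bigr)=\tfrac{1}{2}\tfrac{dx}{py}\equiv\tfrac{1}{\Delta}\bigl(\tfrac{1}{2}a_2-a_1'\bigr)\tfrac{dx}{y}$ modulo exact forms.
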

\begin{proof}
The proof is a mere calculation.
The calculation of the derivative of the period map for
the differential form $\frac{dx}{y}$ and the case  $t_1=0$ is classical and can be  
found in (\cite{sas} p. 304, \cite{sai01} ). 
For the convenience of the reader we explain only the first row of $A_3$.
For $p(x)=4t_0(x-t_1)^3-t_2(x-t_1)-t_3$ we have:
$$
\Delta=-p'\cdot a_1+p\cdot a_2,
$$ 
where 
$$
a_1=
-36t_0^3x^4+144t_0^3t_1x^3+(-216t_0^3t_1^2+15t_0^2t_2)x^2+(144t_0^3t_1^3-30t_0^2t_1t_2)x-36t_0^3t_1^4+15t_0^2t_1^2t_2-t_0t_2^2
$$
$$
a_2=
(-108t_0^3)x^3+(324t_0^3t_1)x^2+(-324t_0^3t_1^2+27t_0^2t_2)x+(108t_0^3t_1^3-27t_0^2t_1t_2-27t_0^2t_3)
$$ 
Now we consider $y$ as a function in $x$ and make the projection of $H_1(E_t,\Z)$ in the
$x$-plane. The derivation with respect to $t_3$ goes inside of the integral and
\begin{eqnarray*}
\frac{\partial}{\partial t_3}(\frac{dx}{y}) &= & \frac{1}{2}\frac{dx}{py} = \frac{1}{\Delta}\frac{(-p'a_1+pa_2)dx}{2py}= \frac{1}{\Delta}(\frac{1}{2}a_2-a_1')\frac{dx}{y} \\
&=& (3t_0^2t_1t_2-\frac{9}{2}t_0^2t_3)\frac{dx}{y}
-3t_0^2t_2\frac{xdx}{y}
 \hbox{ modulo relatively exact 1-forms}
\end{eqnarray*}
(see \cite{prso} p. 41 for a description of calculations modulo relatively exact 1-forms).
Note that in the third equality above we use $y^2=p(x)$ and the fact that modulo exact forms we have
$$
\frac{p'a_1dx}{2py}=\frac{a_1dp}{2py}=\frac{a_1dy}{p}=-a_1d(\frac{1}{y})=\frac{a_1'dx}{y}.
$$

 \end{proof}
Recall that a meromorphic differential form $\omega$ in $\C^2$ is relatively exact for the family (\ref{khodaya}) 
if its restriction 
to each elliptic curve $E_t,\ \Delta(t)\not=0$ is an exact form. This is equivalent to say that $\int_{\delta}\omega=0$ for
all $\delta\in H_1(E_t,\Z)$.

The matrix $A$ is in fact the Gauss-Manin connection of the family
$E_t$ with respect to the basis $\omega$. We consider (\ref{khodaya1}) as an elliptic
curve  $E$ defined over $\Q(t)=\Q(t_0,t_1,t_2,t_3)$. According to  Grothendieck \cite{gro},  
the de Rham cohomology $H_\dR^1(E)$ of $E$  is well-defined. Any element of $H_\dR^1(E)$ can be  represented 
by a meromorphic differential 1-form in $\C^2=\{(x,y)\}$ whose restriction to
a generic  elliptic curve $E_t$ is a differential form of the second type i.e. a meromorphic
differential form on $E_t$ with no residues around its poles. In the case we are considering, each element in 
$H_\dR^1(E)$ can be represented by a differential form with a unique pole at infinity and $H_\dR^1(E)$ is a $\Q(t)$-vector space with the  basis $\{[\frac{dx}{y}],[\frac{xdx}{y}]\}$. 
Roughly speaking, the Gauss-Manin
connection is a $\Q$-linear operator 
$\nabla:H_\dR(E)\rightarrow \Omega^1_{T}\otimes_{\Q(t)}H_\dR(E)$,
where $\Omega^1_{T}$ is the set of algebraic differential 1-forms defined over $\Q$ 
in $T$. 
It satisfies
the Leibniz rule $\nabla(p\eta)=dp\otimes \eta+p\nabla\eta,\ p\in\Q(t),\ \eta\in 
H^1_\dR(E)$ and
\begin{equation}
\label{15feb06}
d\int_{\delta_t}\eta=\int_{\delta_t}\nabla \eta,\  \eta\in 
H^1_\dR(E).
\end{equation}
We write $\nabla(\omega)=B\omega, \ \omega:=(\frac{dx}{y},\frac{xdx}{y})^\tr$, 
use (\ref{15feb06}) and conclude that 
$B=\frac{1}{\Delta}\sum_{i=0}^3A_i$. 
The Gauss-Manin connection is an integrable connection. For our example, this translates into:
$$
dB=B\wedge B  \hbox{ equivalently for }  
B=\mat{\omega_{11}}{\omega_{12}}{\omega_{21}}{\omega_{22}}
$$
$$
d\omega_{11}=\omega_{12}\wedge\omega_{21},\
d\omega_{12}=\omega_{12}\wedge\omega_{22}+\omega_{11}\wedge\omega_{12},\ 
d\omega_{22}=\omega_{21}\wedge\omega_{12},\
d\omega_{21}=\omega_{21}\wedge\omega_{11}+\omega_{22}\wedge\omega_{21}.\ 
$$  
For the procedures which calculate the Gauss-Manin connection see
\cite{hos005}.
\section{Action of an algebraic group}
\label{alggr}
The algebraic group
\begin{equation}
\label{alggroup}
G_0=\left \{\mat{k_1}{k_3}{0}{k_2}\mid \ k_3\in\C, k_1,k_2\in \C^*\right \}
\end{equation}
acts on $\pedo$ from the right by the usual multiplication
of matrices. It acts also in $\C^4$ as follows:
\begin{equation}
\label{action}
t\bullet g:=(t_0k_1^{-1}k_2^{-1},
 t_1k_1^{-1}k_2+k_3k_1^{-1},
t_2k_1^{-3}k_2, t_3k_1^{-4}k_2^2) $$ $$
 t=(t_0,t_1,t_2,t_3)\in\C^4,
g=\mat {k_1}{k_3}{0}{k_2}\in G_0.
\end{equation}
The relation between these two actions of $G_0$ is given by:
\begin{prop}
\label{cano} The period $\per$ is a biholomorphism and
\begin{equation}
\label{gavril}
\per(t\bullet g)=\per(t)\cdot g,\ t\in\A^4,\ g\in G_0.
\end{equation}
\end{prop}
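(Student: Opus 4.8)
The plan is to establish the two assertions of Proposition \ref{cano} by separate arguments, treating the covariance relation \eqref{gavril} first and using it, together with the derivative formula \eqref{4mar}, to deduce that $\per$ is a biholomorphism. For the covariance, I would proceed by a direct computation on the level of period matrices. Fix $t\in\C^4$ with $\Delta(t)\neq 0$ and $g=\mat{k_1}{k_3}{0}{k_2}\in G_0$. The key observation is that the change of variables $t\mapsto t\bullet g$ in the defining equation \eqref{khodaya1} is induced by an explicit isomorphism of affine curves $E_{t\bullet g}\to E_t$ of the form $(x,y)\mapsto(\alpha x+\beta,\gamma y)$ for suitable $\alpha,\beta,\gamma$ expressible in $k_1,k_2,k_3$; one reads off $\alpha,\beta,\gamma$ by substituting and matching coefficients. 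Under this isomorphism the differential forms pull back by $\frac{dx}{y}\mapsto \frac{\alpha}{\gamma}\frac{dx}{y}$ and $\frac{xdx}{y}\mapsto \frac{\alpha}{\gamma}(\alpha\frac{xdx}{y}+\beta\frac{dx}{y})$, so the vector $(\omega_1,\omega_2)$ transforms by a matrix that, after the normalization by $\frac{1}{\sqrt{2\pi i}}$, is exactly right multiplication by $g$; pushing forward a symplectic basis $(\delta_1,\delta_2)$ of $H_1$ does not change the class in $\SL 2\Z\backslash\pedo$. Collecting these identifications yields $\per(t\bullet g)=\per(t)\cdot g$. I expect the only delicacy here to be bookkeeping: getting the exponents of $k_1,k_2$ in \eqref{action} to match the form transformation and checking that the $\Im(x_1\overline{x_3})>0$ condition is preserved (it is, since the relevant factor is a positive real multiple of $|k_1|^{-2}$ or similar).

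For the biholomorphism claim I would argue in three steps. First, $\per$ is a local biholomorphism: by \eqref{4mar} we have $d\per(t)=\per(t)\cdot A^\tr$ where $A=\frac{1}{\Delta}\sum A_i\,dt_i$, and since $\per(t)\in\GL(2,\C)$ everywhere, $d\per$ is injective at $t$ as soon as the map $(dt_0,dt_1,dt_2,dt_3)\mapsto \sum A_i\,dt_i$ is injective into $\Mat(2,\C)$; this is a linear-algebra check on the four matrices $A_0,\dots,A_3$ of Proposition \ref{18.1.06}, and $\det A_i$-type computations (e.g. the off-diagonal entry $27t_0^2t_3^2-t_0t_2^3$ of $A_1$ is a nonzero multiple of $\Delta$) show the four are linearly independent over the generic point, hence on a dense open set; handling all of $T$ may need a slightly more careful argument, possibly invoking the $G_0$-action to reduce to a normal form. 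Second, using the covariance \eqref{gavril} one reduces surjectivity and injectivity to a statement on the quotient: the $G_0$-action on $\C^4$ (restricted to $\{\Delta\neq 0\}$, hence to $\{t_0\neq 0\}$) has a slice, e.g. the locus $t_0=1,\ t_1=0$, which maps to a slice of the $G_0$-action on $\pedo$, namely the upper half plane $\uhp$ sitting inside $\pedo$ as $\mat{\tau}{1}{1}{0}$-type matrices (up to normalization); so it suffices to see that the induced map from the slice to $\uhp$ is a biholomorphism, which is the classical statement that $\tau\mapsto(g_2(\tau),g_3(\tau))$ parametrizes, via $E_\tau=\C/(\Z+\Z\tau)$, all elliptic curves, i.e. the classical Weierstrass uniformization. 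Third, assemble: a $G_0$-equivariant map which restricts to a biholomorphism on slices, where the actions are free with the same orbit structure, is itself a biholomorphism.

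The main obstacle I anticipate is the global (as opposed to generic) injectivity/properness of $\per$ — i.e. ruling out that two parameters $t,t'$ in different $G_0$-orbits could have the same period class. The clean way around this is precisely to lean on the $G_0$-action: once \eqref{gavril} is known and the slice computation identifies the quotient map with the classical modular parametrization $\uhp\to\SL 2\Z\backslash\uhp$ realized by $E_\tau$, injectivity on the quotient is the statement that isomorphic elliptic curves have the same $j$-invariant and conversely, which is classical. So the real content to be verified carefully is (a) the explicit isomorphism $E_{t\bullet g}\cong E_t$ and the resulting form-transformation matrix, and (b) the identification of the slice of the $G_0$-action with the Weierstrass family $y^2=4x^3-g_2x-g_3$, after which everything reduces to known results about the classical period map.
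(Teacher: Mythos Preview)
Your approach is correct and broadly parallel to the paper's. The covariance proof is essentially identical: the paper writes down the explicit isomorphism $\alpha:(x,y)\mapsto(k_2^{-1}k_1 x - k_3 k_2^{-1},\,k_2^{-1}k_1^2 y)$ from $E_{t\bullet g}$ to $E_t$, pulls back $(\omega_1,\omega_2)$, and reads off the matrix $g^{-1}$, exactly as you outline. For the local biholomorphism, the paper also derives it from the Gauss--Manin data, but more cleanly than your linear-independence sketch: it stacks the rows of the four $A_i$ into a single $4\times 4$ matrix $B$ (the Jacobian of $\per$ viewed as a map $\C^4\to\C^4$) and computes $\det B=\tfrac{3}{4}t_0\Delta^3$, which is manifestly nonvanishing on all of $T$. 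This dispenses with your ``generic vs.\ special point'' worry in one stroke.

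The genuine divergence is in the global step on the quotient $\bar\per:T/G_0\to\SL 2\Z\backslash\uhp$. You reduce to the slice $t_0=1,\ t_1=0$ and invoke the classical Weierstrass uniformization (equivalently, bijectivity of the $j$-invariant). The paper instead observes that $\bar\per$ is a local biholomorphism into $\SL 2\Z\backslash\uhp\cong\C$, compactifies by adjoining the cusp (the image of $\{27t_0t_3^2-t_2^3=0\}$), checks continuity at that point, and then applies Picard's Great Theorem to rule out an omitted value, forcing $\bar\per$ to be a biholomorphism. Your route is more direct but leans on a classical result that is morally the same as the statement being proved; the paper's route is a self-contained analytic argument (modulo $\SL 2\Z\backslash\uhp\cong\C$) and has the advantage of generalizing to other period maps where no off-the-shelf uniformization is available.
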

\begin{proof}
We first prove (\ref{gavril}). Let
$$
\alpha: \A^2\rightarrow \A^2,\ (x,y)\mapsto
(k_2^{-1}k_1x-k_3k_2^{-1}, k_2^{-1}k_1^{2}y).
$$
Then
$$
k_2^{2}k_1^{-4}\alpha^{-1}(f)=y^2-4t_0k_2^{2}k_1^{-4} (
k_2^{-1}k_1x-k_3k_2^{-1}-t_1)^3+ t_2k_2^{2}k_1^{-4}(
k_2^{-1}k_1x-k_3k_2^{-1}-t_1)+t_3k_2^{2}k_1^{-4}
$$
$$
y^2-4t_0k_1^{-1}k_2^{-1}(x-(t_1k_2k_1^{-1}+k_3k_1^{-1}))^3+
t_2k_1^{-3}k_2(x-(t_1k_2k_1^{-1}+k_3k_1^{-1}))+t_3k_1^{-4}k_2^{2}.
$$
This implies that $\alpha$ induces an isomorphism of elliptic curves
$$
\alpha: E_{t\bullet g}\rightarrow E_t
$$
Now
$$
\alpha^{-1} \omega= \mat{k_1^{-1}}{0}{-k_3k_2^{-1}k_1^{-1}}{k_2^{-1}}\omega=\mat {k_1}{0}{k_3}{k_2}^{-1} \omega,
$$
where $\omega=(\frac{dx}{y},\frac{xdx}{y})^\tr$, and so
$$
\per(t)= \per(t\bullet g).g^{-1}
$$
which proves (\ref{gavril}).

Let $B$ be a $4\times 4$
matrix and the $i$-th row of $B$ constitutes of the first and
second rows of $A_i$. We use the explicit expressions for $A_i$'s in Proposition \ref{18.1.06} and we
derive the following equality:
$$
\det(B)=\frac{3}{4}t_0\Delta^3
$$
The matrix $B$ is the derivation of the period map seen as a local function from $\C^4$ to $\C^4$. This 
shows that $\per$ is regular at each point $t\in
T$ and hence it is locally a biholomorphism.
The period map $\per$ induces a local biholomorphic map
$\bar \per: T/G_0\rightarrow \SL 2\Z\backslash \uhp\cong \C$.
One can compactify $\SL 2\Z\backslash \uhp$ by adding the cusp $\SL
2\Z/\Q=\{c\}$ (see \cite{la95}) and the map $ \bar \per $ is
continuous at $v$ and sends $v$ to $c$, where $v$ is the point induced by
$t_027t_3^2-t_2^3=0$ in $\A^4/G_0$.
 Using Picard's Great Theorem we conclude that $ \bar \per$ is a
biholomorphism and so $\per$ is a biholomorphism.
\end{proof}


We denote by
$$
F=(F_0,F_1,F_2,F_3):\pedo\stackrel{\alpha}{\rightarrow} \SL 2\Z\backslash \pedo \rightarrow T
$$
the map obtained by the composition of the canonical map $\alpha$ and the  inverse of the 
period map. 
Taking $F$ of (\ref{gavril}) we have
$$
F_0(xg)=F_0(x)k_1^{-1}k_2^{-1}, 
$$
\begin{equation}
\label{2apr05}
F_1(xg)=F_1(x)k_1^{-1}k_2+k_3k_1^{-1},\
\end{equation}
$$
F_2(xg)=F_2(x)k_1^{-3}k_2,\  F_3(xg)=F_3(x)k_1^{-4}k_2^2,\ \forall 
x\in\L,\
g\in G_0.
$$
By Legendre's  Theorem $\det(x)$ is equal to one on $\per 
(1\times 0\times \A\times \A)$ and so the same is true for
$F_0\det(x)$. But the last function is invariant under the action of
$G_0$ and so it is the constant function $1$.  This  means that
$F_0(x)=\det(x)^{-1}$. 

We consider $\per$ as a map sending the vector $(t_0,t_1,t_2,t_3)$ to
$(x_1,x_2,x_3,x_4)$. Its derivative at $t$ is a $4\times 4$ matrix whose
$i$-th column constitutes of the first and second row of
$\frac{1}{\Delta}xA_i^\tr$. We  use (\ref{rosa})
to derive the equality {\tiny
$$
(dF)_x=(d\per)_t^{-1}=
$$
$$
\det(x)^{-1}\left ( \begin{array}{cccc} -F_0x_4 &F_0x_3
&F_0x_2 &-F_0x_1
\\ \frac{1}{12F_0}(12F_0F_1^2x_3-12F_0F_1x_4-F_2x_3)
&-F_1x_3+x_4 &\frac{1}{12F_0}(-12F_0F_1^2x_1+12F_0F_1x_2+F_2x_1)
&F_1x_1-x_2
\\4F_1F_2x_3-3F_2x_4-6F_3x_3
&-F_2x_3 &-4F_1F_2x_1+3F_2x_2+6F_3x_1 &F_2x_1
\\\frac{1}{3F_0}(18F_0F_1F_3x_3-12F_0F_3x_4-F_2^2x_3)
&-2F_3x_3 &\frac{1}{3F_0}(-18F_0F_1F_3x_1+12F_0F_3x_2+F_2^2x_1)
&2F_3x_1
\end{array} \right ).
$$
} 
Define $g_i(z):=F_i \mat{z}{-1}{1}{0}, \ z\in\uhp$.
The equalities of the first column of the above
matrix imply that $(g_1,g_2,g_3): \uhp\rightarrow T$ satisfies the ordinary
differential equation (\ref{raman}). The equalities (\ref{2apr05}) imply that
$g_i$'s satisfy
\begin{equation}
\label{14feb06}
(cz+d)^{-2i}g_i(Az)=g_i(z),\  i=2,3,\
\end{equation}
\begin{equation}
\label{14feb}
 (cz+d)^{-2}g_1(Az)=g_1(z)+c(cz+d)^{-1}, \
\mat{a}{b}{c}{d}\in \SL 2\Z.
\end{equation}
In fact $g_i$'s, 
up to some constants, are the Eisenstein series. More precisely,
\begin{prop}
\label{16feb06}
We have
\begin{equation}
\label{eisenstein}
\es{k}(z)=a_k{\Big (}1+(-1)^k\frac{4k}{B_k}\sum_{n\geq
1}\sigma_{2k-1}(n)e^{2\pi i z n}{\Big )},\ \  k=1,2,3, \ z\in\uhp,
\end{equation}
where $B_k$ is the $k$-th Bernoulli number ($B_1=\frac{1}{6},\
B_2=\frac{1}{30},\ B_3=\frac{1}{42},\ \ldots$), $\sigma_i(n):=
\sum_{d\mid n}d^i$,
\begin{equation}
\label{pinfty}
p_\infty:=(a_1,a_2,a_3)=(\frac{2\pi i}{12},12(\frac{2\pi i}{12})^2 ,
8(\frac{2\pi i}{12})^3)
\end{equation}
\end{prop}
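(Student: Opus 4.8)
The plan is to identify each $g_k$ with a constant multiple of the weight-$2k$ normalized Eisenstein series and then to pin down the three constants. Let $\hat E_{2k}$ denote the series appearing in (\ref{eisenstein}) with $a_k$ replaced by $1$, so that $\hat E_2=1-24\sum\sigma_1(n)q^n$, $\hat E_4=1+240\sum\sigma_3(n)q^n$, $\hat E_6=1-504\sum\sigma_5(n)q^n$ with $q=e^{2\pi iz}$; the assertion is exactly that $g_k=a_k\hat E_{2k}$ with $(a_1,a_2,a_3)$ as in (\ref{pinfty}).

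First I would observe that $g_1,g_2,g_3$ are holomorphic on $\uhp$: since $\per$ is a biholomorphism by Proposition \ref{cano}, $F=\per^{-1}\circ\alpha$ is holomorphic on all of $\pedo$, and $g_k$ is the restriction of $F_k$ to the holomorphic line $z\mapsto\mat{z}{-1}{1}{0}$, which lies in $\pedo$ because $\Im(z)>0$; along this line $\det\mat{z}{-1}{1}{0}=1$, hence $F_0\equiv1$, so $t(z):=(1,g_1(z),g_2(z),g_3(z))$ is a parameter of the family (\ref{khodaya}). By the transformation rule (\ref{14feb06}), $g_2$ and $g_3$ transform under $\SL 2\Z$ exactly like modular forms of weights $4$ and $6$, and by (\ref{14feb}) $g_1$ is quasimodular of weight $2$. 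Thus, granting holomorphy at the cusp $z=i\infty$, one gets $g_2\in M_4(\SL 2\Z)=\C\hat E_4$ and $g_3\in M_6(\SL 2\Z)=\C\hat E_6$ (both spaces being one-dimensional), while $g_1$ lies in the one-dimensional space of holomorphic weight-$2$ quasimodular forms $\C\hat E_2$; hence $g_k=a_k\hat E_{2k}$ for some constants $a_k$.

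The step I expect to be the real obstacle is the holomorphy at $i\infty$, i.e.\ controlling the periods as $E_{t(z)}$ degenerates. By the proof of Proposition \ref{cano}, $\bar\per$ sends the boundary point $v$ of $T/G_0$ (where $27t_0t_3^2-t_2^3=0$) to the cusp $c$, so $t(z)$ approaches a nodal cubic as $z\to i\infty$. After the translation $X=x-g_1(z)$ the curve $E_{t(z)}$ becomes the Weierstrass cubic $y^2=4X^3-g_2(z)X-g_3(z)$, and comparing $\per(t(z))=\mat{z}{-1}{1}{0}$ with the definition of $\per$ shows that its period lattice is $\sqrt{2\pi i}\,(\Z z+\Z)$. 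Then the classical formulas for the periods and the Weierstrass quasi-period of this lattice — the quasi-period entering because $\int_{\delta}\frac{xdx}{y}=\int_{\delta}\frac{XdX}{y}+g_1\int_{\delta}\frac{dX}{y}$, and the bottom row of $\per(t(z))$ being $(1,0)$ forces $g_1=-\bigl(\int_{\delta_2}\frac{XdX}{y}\bigr)\big/\bigl(\int_{\delta_2}\frac{dX}{y}\bigr)$ — give simultaneously the holomorphy at $i\infty$ and the stated $q$-expansions, the coefficient $(-1)^k\frac{4k}{B_k}$ being the classical Eisenstein coefficient written in terms of the Bernoulli numbers. For $g_2$ and $g_3$ this is the classical Weierstrass theorem; the new point is the weight-$2$ identification of $t_1=g_1$ through the quasi-period and the Legendre relation already quoted in the Introduction.

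Finally, the three constants can be recovered purely from the differential equation, which I would include as the cleanest verification: substituting $g_k=a_k\hat E_{2k}$ into the Ramanujan relations (\ref{raman}), with $\dot{}$ standing for $\frac{d}{dz}$ so that $\dot{q^n}=2\pi i\,nq^n$, and comparing the constant terms of the three equations forces $a_2=12a_1^2$ and $a_3=8a_1^3$, which is already the shape of (\ref{pinfty}); comparing the coefficient of $q$ in $\dot g_1=g_1^2-\frac{1}{12}g_2$ gives $-48\pi i\,a_1=-48a_1^2-20a_2$, which with $a_2=12a_1^2$ becomes $-48\pi i\,a_1=-288a_1^2$, i.e.\ $a_1=\frac{2\pi i}{12}$; hence $(a_1,a_2,a_3)=\bigl(\frac{2\pi i}{12},\,12(\tfrac{2\pi i}{12})^2,\,8(\tfrac{2\pi i}{12})^3\bigr)$ as claimed.
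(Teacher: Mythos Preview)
Your proof is correct and runs along the same skeleton as the paper's: use the transformation laws (\ref{14feb06}) and (\ref{14feb}) together with cusp holomorphy to land each $g_k$ in a one-dimensional space, then pin down the constant. The paper simply cites the Weierstrass uniformization theorem for $g_2,g_3$ (which is what your period-lattice identification amounts to) and, for $g_1$, argues uniqueness exactly as you do via $M_2(\SL 2\Z)=0$.

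The one genuine point of divergence is how the finite growth of $g_1$ at $i\infty$ is obtained. You identify $g_1$ directly with the Weierstrass quasi-period ratio $-\eta_2/\omega_2$ and invoke its classical $q$-expansion. The paper instead bootstraps from the Ramanujan system: once $g_2,g_3$ are known to be holomorphic at the cusp (from Weierstrass), the relation $\dot g_2=4g_1g_2-6g_3$ together with $\frac{d}{dz}=2\pi i\,q\frac{d}{dq}$ gives $g_1=(\dot g_2+6g_3)/(4g_2)$, hence $\lim_{\Im z\to\infty}g_1=\tfrac{3a_3}{2a_2}=\tfrac{2\pi i}{12}$. That trick is lighter on input (no quasi-period formulas needed), whereas your route is more self-contained and yields the $q$-expansion of $g_1$ in one stroke rather than by uniqueness. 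Your closing computation of $(a_1,a_2,a_3)$ from the $q$-expansion of the Ramanujan relations is a clean explicit check that the paper leaves implicit.
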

\begin{proof}
The statement for $g_2$ and $g_3$ follows from the Weierstrass 
uniformization Theorem (see for instance \cite{sai01}).  
Note that in our definition of the 
period map the factor $\frac{1}{\sqrt{2\pi i}}$ appears.
The functions $g_k, \ k=1,2,3$  have finite growth at infinity, i.e. 
$\lim_{\Im(z)\to +\infty}g_k(z)=a_k<\infty$. For $g_1$ this follows from
the Ramanujan relations (\ref{raman}) and the equality 
$\frac{d}{dz}=2\pi i q \frac{d}{dq}$, where $q=e^{2\pi i z}$.
The set $M$ of holomorphic functions on $\uhp$ which have finite growth at
infinity and satisfy (\ref{14feb}) contains only one element. The reason is as follows: 
The difference of
any two elements of $M$ has finite growth at infinity and satisfy (\ref{14feb06}) with $i=1$.
Such a holomorphic function is a modular form of weight $2$ which does not exist (see \cite{la95}). 
Now the function $g_1$ and its corresponding series in (\ref{eisenstein}) have finite
growth at infinity and satisfy (\ref{14feb}) (see \cite{apo90} p. 69). 
Therefore, they must be equal.
\end{proof}

\section{Uniformization of $\F(\Ra)$}
\label{uniformi}
From this section on, we set $t_0=1$ and  work again with the family (\ref{khodaya}). 
We use the same
notations for $\per,\pedo,G_0,T, \Delta$ and so on. For instance,
redefine
$$
\pedo:=\{x=\mat {x_1}{x_2}{x_3}{x_4}\in \GL(2,\C)\mid \Im(x_1\ovl{x_3})>0,\
\det(x)=1 \}
$$
and
$$
G_0=\{\mat{k}{k'}{0}{k^{-1}}\mid \ k'\in\C, k\in \C^*\}.
$$
The action of $G_0$ on $\A^3$ is given by
$$
t\bullet g:=(
 t_1k^{-2}+k'k^{-1},
t_2k^{-4}, t_3k^{-6}), t=(t_1,t_2,t_3)\in\A^3,
g=\mat {k}{k'}{0}{k^{-1}}\in G_0.
$$
We also define
$$
g=(g_1,g_2,g_3): \uhp\rightarrow T\subset \A^3,
$$
$$
\Ra:=
(t_1^2-\frac{1}{12}t_2)\frac{\partial }{\partial t_1}+
( 4t_1t_2-6t_3)\frac{\partial }{\partial t_2}+
(6t_1t_3 -\frac{1}{3}t_2^2)\frac{\partial }{\partial t_3},
$$
$$
\eta_{1}:=(t_1^2-\frac{1}{12}t_2)dt_2-
( 4t_1t_2-6t_3)dt_1, \
\eta_2:=( 4t_1t_2-6t_3)dt_3-
(6t_1t_3 -\frac{1}{3}t_2^2)dt_2,
$$
$$
\eta_3:= (t_1^2-\frac{1}{12}t_2)
dt_3-(6t_1t_3 -\frac{1}{3}t_2^2)dt_1,\
\eta_4=3t_3dt_2-2t_2dt_3.
$$
The foliation $\F(\Ra)$ is induced by $\eta_i,i=1,2,3$.
We have
\begin{eqnarray*}
d\Delta(\Ra) & = & (2.27t_3dt_3-3t_2^2dt_2)(\Ra) \\
 & =& 2.27t_3 (6t_1t_3-\frac{1}{3}t_2^2)-3t_2^2(4t_1t_2-6t_3) \\
 &= & 12t_1\Delta.
\end{eqnarray*}
This implies that the variety
$\Delta_0:=\{\Delta=0\}$ is invariant by the foliation $\F(\Ra)$. Inside
$\Delta_0$ we have the algebraic leaf $\{(t_1,0,0)\in\A^3\}$ of $\F(\Ra)$.
We parameterize $\Delta_0$  by $(3t^2,t^3),\ t\in \C$ and conclude that
(\ref{raman}) restricted to $\Delta_0$ is given by
\begin{equation}
\label{raman0}
\F(\Ra)\mid_{\Delta_0}: \left \{ \begin{array}{l}
\dot t=2t_1t-t^2 \\
\dot t_1=t_1^2-\frac{1}{4}t^2
\end{array} \right. .
\end{equation}
It has the first integral $\frac{t_1^2}{t}-t_1+\frac{1}{4}t$. 
This implies that the leaves of $\F(\Ra)$ inside $\Delta_0$ are given by:
$$
t_3^{\frac{1}{3}}-2((t_1+c)^2-t_1^2)^{\frac{1}{2}}=2(t_1+c),\ c\in\C.
$$

\begin{prop}
\label{16apr}
The following is a uniformization of the foliation $\F(\Ra)$
restricted to $T$:
$$
u: \uhp\times (\A^2\backslash \{(0,0)\})\rightarrow T,
$$
\begin{equation}
\label{9apr05}
(z,c_2,c_4)\rightarrow g(z)\bullet
\mat{(c_4z-c_2)^{-1}}{c_4}{0}{c_4z-c_2}=
\end{equation}
$$
(g_1(z)(c_4z-c_2)^2+(c_4z-c_2), g_2(z)(c_4z-c_2)^4, g_3(z)(c_4z-c_2)^6).
$$
\end{prop}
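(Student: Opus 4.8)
The plan is to pull the whole picture back to the period domain $\pedo$ via the biholomorphism $\per$ of Proposition~\ref{cano}, where the foliation $\F(\Ra)$ becomes completely transparent, and then re-read the parametrization $(z,c_2,c_4)$ there. Write $F=\per^{-1}\circ\alpha\colon\pedo\to T$ for the holomorphic covering attached to the inverse of the period map, so that $F(xh)=F(x)\bullet h$ for $h\in G_0$ by (\ref{gavril}), and $g(z)=F\mat{z}{-1}{1}{0}$ is the solution of (\ref{raman}) produced in \S\ref{alggr}, i.e.\ $\dot g_i=\Ra^i(g)$ for $i=1,2,3$, where $\Ra^i$ denotes the $i$-th component of the vector field $\Ra$. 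First I would observe that
\[
\Phi\colon\uhp\times G_0\longrightarrow\pedo,\qquad (z,h)\mapsto\mat{z}{-1}{1}{0}h,
\]
is a biholomorphism, with inverse $x\mapsto\bigl(x_1/x_3,\ \mat{x_3}{x_4}{0}{x_3^{-1}}\bigr)$; here one uses $\det x=1$, that $x_3\neq0$ because $\Im(x_1\ovl{x_3})>0$, and that $\Im(x_1/x_3)=|x_3|^{-2}\Im(x_1\ovl{x_3})>0$, so that indeed $x_1/x_3\in\uhp$. Since $\SL 2\Z$ acts freely and properly discontinuously on $\pedo$, the composite $F\circ\Phi\colon(z,h)\mapsto g(z)\bullet h$ is a holomorphic covering of $T$ with deck group $\SL 2\Z$.

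Next I would locate the map $u$ of (\ref{9apr05}) inside this description. Fix $(c_2,c_4)\in\A^2\setminus\{(0,0)\}$ and set $\rho=\rho(z)=c_4z-c_2$. Away from the analytic hypersurface $\{\rho=0\}$ — which meets $\uhp\times(\A^2\setminus\{(0,0)\})$ only when $c_4\neq0$ and $c_2/c_4\in\uhp$, and on which the curve runs off into $\{\Delta=0\}$, so that $u$ is really defined on the complement of this set — the matrix in (\ref{9apr05}) equals $\mat{\rho^{-1}}{c_4}{0}{\rho}=\mat{x_3}{x_4}{0}{x_3^{-1}}$ with $x_3=\rho^{-1}$, $x_4=c_4$. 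Hence by the equivariance of $F$,
\[
u(z,c_2,c_4)=F\!\left(\mat{z}{-1}{1}{0}\mat{\rho^{-1}}{c_4}{0}{\rho}\right)=F(x(z)),\qquad x(z):=\mat{z\rho^{-1}}{c_2}{\rho^{-1}}{c_4}.
\]
The key observation is that $x(z)$ has constant second column $(c_2,c_4)$, and that as $z$ runs over $\uhp\setminus\{c_2/c_4\}$ the assignment $z\mapsto x(z)$ is a biholomorphism onto the fibre over $(c_2,c_4)$ of the ``second column'' map $\pedo\to\A^2\setminus\{(0,0)\}$, $x\mapsto(x_2,x_4)$ (note that $(x_2,x_4)\neq(0,0)$ for every $x\in\pedo$ because $\det x=1$, and that $c_4z-c_2=x_3^{-1}$ together with $x_1/x_3=z$ recovers the remaining coordinates). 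Assembling these facts, $u$ factors as $F$ composed with a biholomorphism of $\{(z,c_2,c_4)\in\uhp\times(\A^2\setminus\{(0,0)\}):c_4z\neq c_2\}$ onto $\pedo$; in particular $u$ is a holomorphic covering of $T$, hence surjective, and it carries each slice $\uhp\times\{(c_2,c_4)\}$ onto the $F$-image of a fibre of $x\mapsto(x_2,x_4)$.

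It then remains to check that these $F$-images are exactly the leaves of $\F(\Ra)\mid_T$, i.e.\ that $z\mapsto u(z,c_2,c_4)$ is a trajectory of $\Ra$ up to reparametrization. I would verify this directly: writing $u(z,c_2,c_4)=g(z)\bullet\mat{\rho^{-1}}{c_4}{0}{\rho}$ out in coordinates through the $G_0$-action on $\A^3$ and differentiating in $z$, with $\dot\rho=c_4$ and $\dot g_i=\Ra^i(g)$ substituted termwise, one obtains $\dot u_i=\rho^{-2}\,\Ra^i(u)$ for $i=1,2,3$; thus each slice of $u$ is a trajectory of $\Ra$ in the rescaled time $\int\rho^{-2}\,dz$. (Alternatively one can differentiate through $F$ using the explicit matrix $(dF)_x$ of \S\ref{alggr}, applied to the fibre tangent vector $(x_2,0,x_4,0)$: all the ``unwanted'' terms cancel pairwise because $x_1x_4-x_2x_3=\det x=1$, leaving $-\Ra(F(x))$; it is precisely here that the Legendre relation on the periods is used.) Since $\F(\Ra)$ is the one-dimensional foliation whose leaves are the trajectories of $\Ra$, and $u$ is a surjective submersion whose slices are carried onto such trajectories, this proves that $u$ is the asserted uniformization. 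The only genuine work is bookkeeping — pinning down the precise domain of $u$ (the locus $c_4z=c_2$, where the leaf escapes into $\{\Delta=0\}$) and keeping in mind that $F$, and hence $u$, is a covering with deck group $\SL 2\Z$ rather than a biholomorphism — together with the short rescaled-time computation above; the conceptual content is already supplied by Proposition~\ref{cano} and by the relation $\dot g_i=\Ra^i(g)$.
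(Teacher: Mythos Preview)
Your proof is correct and tracks the paper's own argument closely: the paper likewise offers both the direct tangency check (stated there as ``one may check directly'') and the period-domain route, the latter carried out by computing $d\per(\Ra)=\per\cdot\mat{0}{0}{*}{0}$ from the Gauss--Manin matrices, which is exactly dual to your $(dF)_x(x_2,0,x_4,0)=-\Ra(F(x))$. You supply more structure than the paper's terse argument --- the explicit biholomorphism $\Phi$, the covering discussion, the domain caveat at $c_4z=c_2$, and the reparametrization factor $\rho^{-2}$ --- but the underlying ideas coincide.
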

\begin{proof}
One may check directly that for fixed $c_2,c_4$ the map induced by $u$
is tangent to (\ref{raman}) which implies the Proposition.
We give another proof which uses the period map:
From (\ref{rosa}) we have
$$
d(\per)(t)=\frac{1}{\Delta}\per(t)\mat {\frac{3}{4}\eta_2}{\frac{3}{2}\eta_4}
{\frac{9}{2}t_3\eta_1-3t_2\eta_3+\frac{3}{2}t_1\eta_2}
{-\frac{3}{4}\eta_2 }^{\tr}.
$$
Therefore,
$$
d(\per(t))(\Ra(t))=\per(t)\mat{0}{0}{*}{0}=\mat{*}{0}{*}{0}.
$$
This implies that the $x_2$ and $x_4$ coordinates of
the pull forward of the vector field $\Ra$ by $\per$ are zero. Therefore, the
leaves of $\F(\Ra)$ in the period domain are of the form
$$
\mat{z(c_4z-c_2)^{-1}}{c_2}{(c_4z-c_2)^{-1}}{c_4}=
\mat{z}{-1}{1}{0}
\mat{(c_4z-c_2)^{-1}}{c_4}{0}{c_4z-c_2}.
$$
\end{proof}

\section{Proof of Theorem \ref{foli}}
\label{proofof}
We follow the notations introduced in \S \ref{uniformi}. In
particular we work with the family (\ref{khodaya1}) with $t_0=1$. 

{\it Proof of 1.}  The first part follows from Proposition \ref{16apr}. The
leaves of the pull-forward of the foliation $\F(\Ra)$ by the period
map $\per$ have constant $x_2$ and $x_4$ coordinates. By definition
of $B_{\frac{xdx}{y}}:=\Im(x_2\overline{x_4})$ in the period domain, we conclude that
$M_r$'s are $\F(\Ra)$-invariant. 
On $M_0$ an
$x\in \pedo$ can be written in the form $\mat
{x_1}{x_4r}{x_3}{x_4},\ r\in\R, x_4(x_1-rx_3)=1$. Then
\begin{equation}
\label{10apr05}
B_{\frac{dx}{y}, \frac{xdx}{y}}(x)=\overline{x_4}(x_1-rx_3)=\frac{\overline{x_4}}{x_4}.
\end{equation}
which implies that $N_w$'s are $\F(\Ra)$-invariants.

{\it Proof of 2.} Let us define
$$
L_{c_2,c_4}:=
\left \{\mat{z(zc_4-c_2)^{-1}}{c_2}{(zc_4-c_2)^{-1}}{c_4} \mid z\in \uhp\backslash\{\frac{c_2}{c_4}\}
\right
\}.
$$
We look at a leaf $L_{c_2,c_4}$ of $\F(\Ra)$ at the period 
domain $\pedo$. The leaf $[L_{c_2,c_4}]\subset \SL 2\Z\backslash \pedo $ may 
not be biholomorphic to $\uhp\backslash\{\frac{c_2}{c_4}\}$ 
if there exists $A\in\SL 2\Z$ which maps a point of  
$L_{c_2,c_4}$ to another point in $L_{c_2,c_4}$. This implies that  
$A[c_2,c_4]^{\tr}=[c_2,c_4]^{\tr}$ and hence  $\frac{c_4}{c_2}\in\Q$. After taking another representative for
the leaf $[L_{c_2,c_4}]$, we can assume that $c_4=0$. Now, the only elements
of $\SL 2\Z$ which maps $[c_2,0]$ to itself are of the form $\mat{1}{b}{0}{1},b\in \Z$. This implies that
the corresponding leaf in $T$ is biholomorphic to $\D\backslash \{0\}$.
If $B_{\frac{xdx}{y}}(t)\leq 0$ and $t\not\in K$, then $\frac{c_2}{c_4}\not\in\uhp$ and $L_t$ is biholomorphic to 
$\uhp$. If $B_{\frac{xdx}{y}}(t)>0$ then $\frac{c_2}{c_4}\in \uhp$ and $L_t$ is biholomorphic to $\uhp\backslash\{\frac{c_2}{c_4}\}$. 

{\it Proof of 3.}
Take $t\in K$ and a cycle $\delta\in
H_1(E_t,\Z)$ such that $\int_\delta\frac{xdx}{y}=0$ and
$\delta$ is not of the form $n\delta'$ for  some $2\leq n\in \N$ and
$\delta'\in H_1(E_t,\Z)$. We choose another $\delta'\in
H_1(E_t,\Z)$ such that $(\delta',\delta)$ is a basis of
$H_1(E_t,\Z)$ and the intersection matrix in this basis is
$\mat{0}{1}{-1}{0}$. Now $\per(t)$ has zero $x_4$-coordinate and so
its $B_{\frac{xdx}{y}}$ is zero. This implies that $K\subset M_0$. It is dense because
an element $\mat{x_1}{x_4r}{x_3}{x_4}\in M_0\subset \L$ 
can be approximated by the elements in $M_0$ with $r\in \Q$.  

The image of the map $g$ is the locus of the points $t$ in $T$
such that $\per(t)$ is of the form $\mat{z}{-1}{1}{0}$ in a basis
of $H_1(E_t,\Z)$. We look $g$ as a function of $q=e^{2\pi
iz}$ and we have
$$
g(0)=p_\infty,\ \frac{\partial g}{\partial
q}(0)=(-24a_1,240a_2,-504a_3)
$$
where $a_i$'s are defined in (\ref{pinfty}).
This implies that the image of $g$ intersects $\sing(\Ra)$ transversely.
For $t\in K$ the $x_4$-coordinate of $\per$ is zero and the leaf
through $t$, namely $L_t$, has constant $x_2$-coordinate, namely
$c_2$. By (\ref{9apr05}) $L_t$ is uniformized by
$$
u(z)=(c_2^2g_1(z), c_2^4g_2(z),c_2^6g_3(z)),\ z\in\uhp .
$$
This implies that $L_t$ intersects $\sing(\Ra)$ transversely at
$(c_2^2a_1,c_2^4a_2,c_2^6a_3)$.

Note that the leaf space $\SL 2\Z \backslash(\C^2\backslash \{\Im(c_2\bar c_4)>0\})$ 
of the foliation $\F(\Ra)$ in $M_{r>0}$   is biholomorphic to the quasi affine set 
$\C^2 \backslash \{27t_3^2-t_2^3=0\}$ using the Eisenstein series. The same is true 
for $M_{r<0}$. The leaf space in $M_0$ is isomorphic to
$\C^{*}\times \SL 2\Z \backslash \R $ as a set and so has no reasonable structure. 

{\it Proof of 4.} Let $t\in T$ and the leaf $L_t$ through $t$ have an
accumulation point at $t_0\in T$. We use the period map $\per$ and
look $\F(\Ra)$ in the period domain. For $(c_2,c_4)\in
\A^2\backslash \{0\}$ the set $S=\{A(c_2,c_4)^\tr\mid A\in \SL
2\Z\}$ has an accumulation point in $\A^2$ if and only if
$\frac{c_2}{c_4}\in\R\cup{\infty}$ or equivalently $B_{\frac{xdx}{y}}(t)=0$.

{\it Proof of 5.} It is already proved in \S \ref{uniformi}.
\section{Elliptic modular foliations}
\label{emf}
Let $\eta$ be any meromorphic differential 1-form in $\C^2$ whose restriction to
a smooth  elliptic curve $E_t$ gives us a differential form of the second type.
For instance, one can take $\eta=\frac{p(x,y)dx}{y}$ or $p(x,y)(3xdy-2ydx)$, where $p$ is a 
polynomial in $x,y$. Such a $1$-form can be written in the form
\begin{equation}
\label{26feb06}
\eta=p_1(t)\frac{dx}{y}+p_2(t) \frac{xdx}{y}\ \hbox{ modulo relatively exact $1$-forms},
\end{equation}
where $p_1$ and $p_2$ are two meromorphic functions in $t$ with poles along $\Delta=0$ (a meromorphic one form $\eta$ in $\C^2$ is called relatively exact if its
restriction to each smooth elliptic curve $E_t$ is an exact form). 

An elliptic modular foliation $\F_\eta$ associated to $\eta$ is a foliation in 
$\C^3=\{(t_1,t_2,t_3)\}$ given locally by  the constant locus of the integrals 
$\int_{\delta_t}\eta, \delta_t\in H_1(E_t,\Z)$,
i.e. along the leaves of $\F_\eta$ the integral $\int_{\delta_t}\eta$ as a function in 
$t$ is
constant. The algebraic description of $\F_\eta$ is as follows:
We write $ \eta=p\omega$, where $\omega=(\frac{dx}{y},\frac{xdx}{y})^\tr$ and $p=(p_1,p_2)$. 
If  $\nabla\omega=B\omega$ is the Gauss-Manin connection 
of the family (\ref{khodaya}) with respect to the basis $\omega$ (see \S \ref{ramanfoli}) then
$$
\nabla(\eta)=\nabla(p\omega)=(dp+pB)\omega
$$
and it is easy to see that
\begin{equation}
\label{mondai}
\F_\eta: dp_1+p_1\omega_{11}+p_2\omega_{21}=0,\ dp_2+p_1\omega_{12}+p_2\omega_{22}=0,
\end{equation}
where $B=\mat{\omega_{11}}{\omega_{12}}{\omega_{21}}{\omega_{22}}$.
By the first part of Theorem \ref{foli} we know that $\F_{\frac{xdx}{y}}=\F(\Ra)$. 
Using the above expression for $\F_\eta$ one can show that $\{\Delta=0\}$ is $\F_\eta$-invariant
and every leaf of $\F_\eta$ inside $\{\Delta=0\}$ is algebraic. 
\begin{exam}\rm
For $s$ a fixed  complex number, the foliation $\F_{\frac{(s+x)dx}{y}}$ is given by the vector field:
$$
(t_1^2+2t_1s-\frac{1}{12}t_2+s^2)\frac{\partial}{\partial t_1}+
(4t_1t_2+4t_2s-6t_3)\frac{\partial}{\partial t_2}+
(6t_1t_3-\frac{1}{3}t_2^2+6t_3s)\frac{\partial}{\partial t_3}
$$
For $s=0$ this is the foliation $\F(\Ra)$ discussed in the previous sections and
for $s=\infty$ this is the trivial foliation $\F_{\frac{dx}{y}}: dt_2=0,\ dt_3=0$.
\end{exam}
\begin{exam}\rm
We have $\frac{x^2dx}{y}=(-t_1^2+\frac{1}{12}t_2)\frac{dx}{y}+2t_1\frac{xdx}{y}$ modulo
relatively exact forms and so $\F_{\frac{x^2dx}{y}}$ is given by:
$$
(-48t_1^4+24t_1^2t_2-48t_1t_3+t_2^2)\frac{\partial}{\partial t_1}+
  (-384t_1^3t_2+1728t_1^2t_3-96t_1t_2^2+48t_2t_3)\frac{\partial}{\partial t_2}+
  $$
  $$
  (-576t_1^3t_3+96t_1^2t_2^2-144t_1t_2t_3-8t_2^3+288t_3^2)
   \frac{\partial}{\partial t_3}.
$$
\end{exam}

\section{Abelian subvariety theorem}
\label{ast}
In this section we are going to state a consequence of the abelian subvariety theorem
on periods of an abelian variety defined over $\bar \Q$. For the convenience of the reader,
we recall some basic facts about abelian varieties. For further information the reader
is referred to \cite{labi} for  the analytic theory  and \cite{milab} for the arithmetic theory of
abelian varieties.
 
An abelian variety $A$ viewed as a complex manifold is biholomorphic to $\C^g/\Lambda$, where
$\Lambda$ is a lattice of rank $2g$ in $\C^g$. In addition we have an embedding of $A$ in some
projective space which makes sense to say that $A$ is defined over $\bar\Q$. From now on, we
work only with the category of abelian varieties defined over $\bar \Q$. 
According to Grothendieck \cite{gro}  the de Rham cohomology $H_{\dR}^1(A)$ can be constructed
in the context of algebraic geometry and it is a $\bar \Q$-vector 
space of dimension $2\dim(A)$. Every 
$[\omega]\in H^1_\dR(A)$ is represented by a differential form $\omega$ of the first or  second type defined over $\bar\Q$. 
A differential 1-form $\omega$ on $A$ is called to be 
of the first type if it is holomorphic on $A$ and it is called to be of the second type if it is 
meromorphic with poles but no residues around the poles.
Let $A_1,A_2$ be two abelian varieties of the same dimension defined over $\bar \Q$.
 An isogeny between $A_1$ and $A_2$ is a surjective morphism  $f:A_1\rightarrow A_2$ of 
 algebraic varieties defined over $\bar \Q$ with $f(0_{A_1})=0_{A_2}$. 
 It is well-known that every isogeny is a group homomorphism and there is another isogeny $g:A_2\rightarrow A_1$ such that 
$g\circ f=n_{A_1}$ for some $n\in\N$, where $n_{A_1}$ is the multiplication by $n$ map in $A_1$. 
The isogeny $f$ induces an isomorphism $f_*:H_1(A_1,\Q)\rightarrow H_1(A_2,\Q)$( resp. 
$f^*:H_\dR^1(A_2)\rightarrow H_\dR^1(A_1)$) of $\Q$-vector spaces (resp. $\bar\Q$-vector spaces). 
For $A=A_1=A_2$  simple, it turns out that $\End_0(A)=\End(A)\otimes_\Z\Q$ is a division algebra, i.e. it is a ring, possibly
non-commutative, in which every non-zero element has an inverse.
An abelian variety is called simple if it does not contain a non trivial abelian subvariety.
Every abelian variety is isogenous to the
direct product $A_1^{k_1}\times A_2^{k_2}\times\cdots \times A_n^{k_n}$
of simple, pairwise non-isogenous abelian varieties $A_i$, all defined over
$\bar\Q$ and this decomposition is unique up to isogeny and permutation
of the components. For an abelian variety $A$ defined over $\bar \Q$ the period set
$$
P(A):=\{\int_{\delta}\omega\mid \delta\in H_1(A,\bar\Q),\ [\omega]\in H_\dR^1(A)\}
$$
is a $\bar \Q$-vector space of dimension at most $(2\dim A)^2$. We are going to
state the precise description of $\dim_{\bar\Q}P(A)$.

Let $A$ be a simple abelian variety.
The division algebra
$k:=\End_0(A)$ acts both on $H_1(A,\Q)$ and $H_\dR^1(A)$ and we have
$$
\int_{a\cdot \delta}\omega=\int_{\delta}a\cdot \omega,\ a\in k,\ [\omega]\in H_\dR^1(A).
$$ 
This means that the periods of $a\cdot \delta$ reduces to the periods of $\delta$.
Let $H_1(A,\Q)=\oplus_{j=1}^s k\cdot \delta_j$  be the decomposition of $H_1(A,\Q)$ under the action
of $k$. Each $k\cdot \delta_j$ is a $\Q$-vector space of dimension $\dim_\Q k$ and so
$s=\frac{\dim_{\Q}H_1(A,\Q)}{\dim_\Q k}=\frac{2\dim(A)}{\dim_{\Q} (\End_0(A_i))}$.
Considering $r=2\dim A$ differential forms  $\omega_1,\omega_2,\cdots,\omega_r$ 
which form a basis of $H^1_\dR(A)$, we obtain
$s_A:=\frac{4\dim(A)^2}{\dim_{\Q} (\End_0(A_i))}$ periods $\int_{\delta_j}\omega_i,\ i=1,2,\ldots,r,\ j=1,2,\ldots,s$ which span the $\bar \Q$-vector space $P(A)$ and may be $\bar\Q$-independent. 
If $A$ is  isogenous to the direct product $A_1^{k_1}\times A_2^{k_2}\times\cdots \times A_n^{k_n}$
of simple, pairwise non-isogenous abelian varieties $A_i$, all defined over
$\bar\Q$ , then we obtain  $\sum_{i=1}^ns_{A_i}$ periods which span the 
$\bar \Q$-vector space $P(A)$. In fact, they form a basis and there is no more relation
between the periods of $A$:
\begin{theo}
\label{dimension}
Let $A$ be an abelian variety defined over $\bar\Q$ and isogenous to the
direct product $A_1^{k_1}\times A_2^{k_2}\times\cdots \times A_n^{k_n}$
of simple, pairwise non-isogenous abelian varieties $A_i$, all defined over
$\bar\Q$.
Then the $\bar \Q$-vector space $V_A$ generated by $1, 2\pi i$ together 
with all periods $\int_{\delta}\omega,\ \delta\in H_1(A,\Q),\ [\omega]\in H^1_\dR(A)$, has dimension
$$
\dim_{\bar\Q}(V_A)=2+4\sum_{i=1}^n\frac{\dim(A_i)^2}{\dim_{\Q} (\End_0(A_i))}.
$$
\end{theo}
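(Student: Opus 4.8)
The plan is to reduce the statement to a transcendence result about periods that is already available in the literature (this is precisely the content of the ``abelian subvariety theorem'' alluded to before the statement), and then to do the bookkeeping needed to pass from a single simple factor to the general isogeny decomposition. First I would observe that both $\dim_{\bar\Q}(V_A)$ and the right-hand side are isogeny invariants: an isogeny $f\colon A_1\to A_2$ induces a $\bar\Q$-linear isomorphism $f^*\colon H^1_\dR(A_2)\to H^1_\dR(A_1)$ and a $\Q$-linear isomorphism $f_*\colon H_1(A_1,\Q)\to H_1(A_2,\Q)$ which are compatible with integration, so $P(A_1)=P(A_2)$ and hence $V_{A_1}=V_{A_2}$; on the right, $\dim(A_i)$ and $\End_0(A_i)$ are manifestly isogeny invariants. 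Therefore it suffices to treat $A=A_1^{k_1}\times\cdots\times A_n^{k_n}$ literally. Next I would note that for a product, $H^1_\dR(A)=\bigoplus_{i,j} H^1_\dR(A_i)$ (with $j$ running over the $k_i$ copies) and $H_1(A,\Q)=\bigoplus_{i,j}H_1(A_i,\Q)$, and since a cycle in one factor pairs trivially with a form in another factor, every period of $A$ is a period of some $A_i$. Consequently $V_A$ is the $\bar\Q$-span of $1,2\pi i$ and $\bigcup_i P(A_i)$, and the extra copies contribute nothing new; so the problem collapses to: (i) compute $\dim_{\bar\Q}$ of the space spanned by $1,2\pi i$ and $P(A_i)$ for each simple $A_i$, and (ii) show there are no $\bar\Q$-linear relations mixing periods of non-isogenous simple factors.

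For step (i), fix a simple abelian variety $B$ with $k=\End_0(B)$, a division algebra of dimension $d$ over $\Q$, and $\dim B=g$. As explained in the paragraph preceding the theorem, decomposing $H_1(B,\Q)=\bigoplus_{j=1}^s k\cdot\delta_j$ with $s=2g/d$ and choosing a $\bar\Q$-basis $\omega_1,\dots,\omega_{2g}$ of $H^1_\dR(B)$, the periods $\int_{\delta_j}\omega_i$ already span $P(B)$ over $\bar\Q$, giving the upper bound $\dim_{\bar\Q}P(B)\le 2gs=4g^2/d$; the relation $\int_{a\delta}\omega=\int_\delta a\omega$ is what lets one discard the remaining $\delta$'s. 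The heart of the matter is the matching lower bound: these $4g^2/d$ periods, together with (depending on how $2\pi i$ sits in $P(B)$) are $\bar\Q$-linearly independent. This is exactly where I would invoke the known consequence of the analytic subgroup theorem / Grothendieck's period conjecture in the one case where it is a theorem, namely for abelian varieties: the period torsor of a simple abelian variety defined over $\bar\Q$ has the expected dimension, equivalently the motivic Galois group acts with full orbit. I would cite \cite{wowu} (and the references there) for the precise statement $\dim_{\bar\Q}P(B)=4g^2/d$, and then account for the factor ``$2+$'': $2\pi i$ is itself a period (it occurs in the Legendre-type relation, cf.\ the $2\pi i$ appearing in \S\ref{ramanfoli}), and $1\notin P(B)$ while $1$ and the genuinely transcendental periods are independent, so adjoining $1$ adds exactly one dimension and adjoining $2\pi i$ adds at most one more, with the theorem asserting it adds exactly one. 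Summing over the distinct isogeny classes $A_1,\dots,A_n$ and using that each contributes $4\dim(A_i)^2/\dim_\Q\End_0(A_i)$ fresh dimensions gives $\dim_{\bar\Q}(V_A)=2+4\sum_i \dim(A_i)^2/\dim_\Q\End_0(A_i)$.

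The main obstacle is step (ii) — the claim that periods of $A_i$ and $A_\ell$ for $i\ne\ell$ are ``independent'' in the strong sense that the dimensions simply add. This does not follow from additivity of $H_\dR^1$ and $H_1$ alone; it is again a substantive transcendence input, and it is precisely the form in which the abelian subvariety theorem is usually packaged: the $\bar\Q$-dimension of the period space of $A_1\times\cdots\times A_n$ equals the sum of the $\bar\Q$-dimensions of the period spaces of the $A_i$, modulo the single shared relation that all of them contain $2\pi i$. So in the write-up I would state this as the cited theorem from \cite{wowu} and be careful to subtract the overcounting of $\{1,2\pi i\}$ exactly once rather than once per factor — that is the only place where a naive sum would give the wrong answer. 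Everything else (the isogeny-invariance reductions, the product decomposition of cohomology, and the elementary count $s_{A_i}=4\dim(A_i)^2/\dim_\Q\End_0(A_i)$) is routine linear algebra over $\bar\Q$ and $\Q$ and can be dispatched quickly once the transcendence statement is in hand.
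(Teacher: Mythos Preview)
The paper does not supply its own proof of this theorem. Immediately after the statement it says the result ``is a consequence of W\"ustholz analytic subgroup theorem (see for instance \cite{shwo95} Lemma 1). It is stated and proved in Theorem 6.1 of \cite{shtswo} (appendix),'' and then lists further references \cite{wowu}, \cite{shwo95}, \cite{hoes}. So there is nothing to compare your argument against except the paper's one-line attribution.

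Your outline is consistent with that attribution: the reductions (isogeny invariance of both sides, passage to the literal product, upper bound via the $\End_0$-module decomposition of $H_1$) are exactly the bookkeeping the paper sketches in the paragraph \emph{before} the theorem, and you correctly isolate the two places where genuine transcendence input is needed --- the lower bound for a single simple factor and the absence of $\bar\Q$-linear relations mixing periods of non-isogenous factors --- both of which come from W\"ustholz's analytic subgroup theorem. That is precisely the route taken in the cited references.

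One point in your write-up needs correction. You say ``$2\pi i$ is itself a period (it occurs in the Legendre-type relation)'', but the Legendre relation expresses $2\pi i$ as a \emph{bilinear} combination of periods, not a $\bar\Q$-linear one, so in general $2\pi i\notin P(B)$. The dimension formula $\dim_{\bar\Q}V_A=2+\sum_i s_{A_i}$ is asserting exactly that $1$ and $2\pi i$ are each $\bar\Q$-linearly independent from $P(A)$; the paper stresses this in the sentence right after the theorem (``$\bar\Q$-linear independent among themselves and even with the numbers $1,\pi$''). So in your accounting of the ``$2+$'' you should argue that adjoining $1$ and $2\pi i$ each adds a dimension, not that $2\pi i$ is already present. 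Apart from this, your plan is sound and matches what the paper points to.
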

Note that the above theorem says a little bit more: The collection of $s_A$ periods
which we described before are $\bar \Q$-linear  independent among themselves and even with
the numbers $1,\pi$.   
The above theorem is  a consequence of W\"ustholz analytic subgroup theorem 
(see for instance 
\cite{shwo95} Lemma 1). It is stated and proved in Theorem 6.1 of \cite{shtswo} (appendix).
Similar theorems are stated and used by many authors (see \cite{wowu} Satz 1, Satz 2. 
\cite{shwo95} Proposition 2, \cite{hoes} Corollary 1). 
In this text we need the following corollaries of the above theorem. 
\begin{coro} 
\label{stefan}
Let $A_1$ and $A_2$ be two abelian varieties over $\bar\Q$ 
with a common non-zero period, i.e. there exist 
$[\omega_i]\in H_\dR^1(A_i),\ \delta_i\in H_1(A_i,\Q), \ i=1,2$ 
such that 
$\int_{\delta_1}\omega_1=\int_{\delta_2}\omega_2\not =0$. 
Then there is sub abelian
varieties $B_1$ of $A_1$ and $B_2$ of $A_2$ with $B_1$ isogenous to $B_2$. 
In particular, if $A_1$ and $A_2$ are simple then
$A_1$ is isogenous to $A_2$. In this case, we have an isogeny $a:A_1\rightarrow A_2$ 
such that $a^{*} [\omega_2]=n[\omega_1]$ and $a_*\delta_1=n\delta_2$ for some $n\in\N$, where 
$a^{*}:H_\dR^1(A_2)\rightarrow H_\dR^1(A_1)$ and $a_*:H_1(A_1,\Q)\rightarrow H_1(A_2,\Q)$
are the induced maps in the first cohomology, respectively homology. 
\end{coro}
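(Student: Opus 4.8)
The plan is to derive everything from Theorem \ref{dimension} applied to the product abelian variety $A:=A_1\times A_2$. The key observation is that a single $\bar\Q$-linear relation among the periods of $A$, beyond those predicted by the isogeny decomposition, forces two of the simple factors appearing in $A_1$ and in $A_2$ to coincide up to isogeny. First I would write $A_1\sim \prod_i C_i^{m_i}$ and $A_2\sim\prod_j D_j^{n_j}$ as products of simple, pairwise non-isogenous abelian varieties, and then observe that $A\sim \prod C_i^{m_i}\times\prod D_j^{n_j}$; after collecting together any $C_i$ isogenous to some $D_j$, this is again a decomposition into simple, pairwise non-isogenous factors, with one exponent per isogeny class. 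Theorem \ref{dimension} then tells us that the $\bar\Q$-dimension of $V_A$ equals $2+4\sum (\dim)^2/\dim_\Q\End_0$ over the distinct isogeny classes. If \emph{no} simple factor of $A_1$ were isogenous to a simple factor of $A_2$, then the isogeny classes occurring in $A$ are exactly the (disjoint) union of those in $A_1$ and those in $A_2$, and subtracting the contributions of $1$ and $2\pi i$ which are shared, one gets $\dim_{\bar\Q}V_A = \dim_{\bar\Q}V_{A_1}+\dim_{\bar\Q}V_{A_2}-2$. In particular the $s_{A_1}$ periods coming from $A_1$ and the $s_{A_2}$ periods coming from $A_2$, together with $1$ and $2\pi i$, would all be $\bar\Q$-linearly independent.

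Next I would use the hypothesis. The equality $\int_{\delta_1}\omega_1=\int_{\delta_2}\omega_2\neq 0$ is a nontrivial $\bar\Q$-linear relation among the distinguished periods of $A_1$ and those of $A_2$ (the period $\int_{\delta_1}\omega_1$ of $A_1$ equals a period of $A_2$, and neither is a $\bar\Q$-multiple of $1$ or $2\pi i$ by Theorem \ref{dimension} — here one needs that a period of a single simple factor is not $\bar\Q$-rationally a multiple of $2\pi i$, which again follows from the dimension count since $2\pi i$ already contributes independently). This contradicts the independence statement of the previous paragraph. Hence some simple factor $B_1$ of $A_1$ is isogenous to some simple factor $B_2$ of $A_2$, which is the first assertion (taking $B_1\hookrightarrow A_1$, $B_2\hookrightarrow A_2$ the corresponding abelian subvarieties). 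When $A_1$ and $A_2$ are themselves simple, $B_i=A_i$ and we conclude $A_1\sim A_2$.

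Finally, in the simple case I would promote the isogeny to one matching the given cohomology class and cycle. Fix any isogeny $b:A_1\to A_2$. Then $b^*:H^1_\dR(A_2)\to H^1_\dR(A_1)$ is a $\bar\Q$-linear isomorphism commuting with the $k=\End_0(A_1)$-action (after identifying $\End_0(A_1)\cong\End_0(A_2)$ via $b$), and similarly $b_*$ on $H_1(\cdot,\Q)$. The relation $\int_{\delta_1}\omega_1=\int_{\delta_2}\omega_2$ together with Theorem \ref{dimension} applied to the simple variety $A_1$ — which says the $s_{A_1}$ periods $\int_{\delta_j}\omega_i$ are $\bar\Q$-independent — forces $b^*[\omega_2]$ and $[\omega_1]$ to be $k$-proportional, and likewise $b_*\delta_1$ and $\delta_2$; composing $b$ with a suitable element of $k$ (and clearing denominators by an integer $n$ to stay within isogenies and integral homology) yields the desired $a$ with $a^*[\omega_2]=n[\omega_1]$ and $a_*\delta_1=n\delta_2$. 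The main obstacle I expect is this last bookkeeping step: making the action of the (possibly noncommutative) division algebra $\End_0(A)$ interact correctly with the period pairing so that a relation among \emph{numbers} lifts to a relation among \emph{cohomology classes and cycles} — one must use the identity $\int_{a\cdot\delta}\omega=\int_\delta a\cdot\omega$ and the precise independence count $s_{A_1}=4(\dim A_1)^2/\dim_\Q\End_0(A_1)$ from Theorem \ref{dimension} rather than a naive dimension comparison.
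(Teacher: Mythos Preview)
Your proposal is correct and follows essentially the same route as the paper: apply Theorem \ref{dimension} to $A_1\times A_2$ to force a common simple factor, then in the simple case pull $\omega_2$ and $\delta_2$ back through a fixed isogeny $b$ and use the $\bar\Q$-independence of the $s_{A_1}$ basic periods of $A_1$ to locate the correcting element of $\End_0(A_1)$. The paper resolves the bookkeeping you flag by first finding $c\in\End_0(A_1)$ with $c\cdot\delta_1=b_*^{-1}\delta_2$ (from the independence of the periods over the distinguished cycles $\delta_j$), and then observing that for $d=nc\in\End(A_1)$ the single relation $\int_{\delta_1}(d^*b^*\omega_2-n\omega_1)=0$ together with the $\bar\Q$-independence of $\{\int_{\delta_1}\omega_i\}_i$ already forces $d^*b^*[\omega_2]=n[\omega_1]$ --- so no second element of $k$ is needed for the cohomology side.
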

Note that all the abelian varieties and isogenies in the above corollary are defined over $\bar\Q$.
\begin{proof}
If there is no common factor in the decomposition of $A_1$ and $A_2$ into simple
abelian varieties then applying Theorem \ref{dimension} to $A_1$ and $A_2$ and $A_1\times A_2$
we conclude that $\dim_{\bar \Q}P(A_1\times A_2)=\dim_{\bar \Q}P(A_1)+\dim_{\bar \Q}P(A_2)$.
This implies that $P(A_1)\cap P(A_2)=\{0\}$ which contradicts the hypothesis.

Now, let us prove the second part. Choose an isogeny $b:A_1\rightarrow A_2$ and 
let $\tilde\delta_2=b_{*}^{-1}\delta_2$ and $\tilde \omega_2=b^{*} \omega_2$. 
Since $\int_{\tilde \delta_2}{\tilde \omega_2}=\int_{\delta_1}\omega_1\not =0$, 
there must be $c\in \End_0(A_1)$ with $c\cdot \delta_1=\tilde\delta_2$, otherwise
by our hypothesis and Theorem \ref{dimension} applied for $A_1$, 
 we will get less dimension for $P(A_1)$. We choose $n\in\N$ such
that $d:=n\cdot c\in \End(A_1)$  and so we have $d_{*} \delta_1=n\tilde \delta_2$. 
By our hypothesis we have 
$$
\int_{\delta_1}d^{*}\tilde \omega_2=\int_{d_*\delta_1}\tilde \omega_2=
n\int_{\delta_1}\omega_1
$$ and
so by Theorem \ref{dimension} we must have $d^{*}\tilde \omega_2=n\omega_1$
(for this one can also use \cite{wowu},  Satz 2).
Now, $e=b\circ d:A_1\rightarrow A_2$ has the properties: $e_*\delta_1=n\delta_2,\ e^*[\omega_2]=n[\omega_1]$.  
\end{proof}
I do not know whether Corollary \ref{stefan} is true for $n=1$ or not. To obtain $n=1$ we have to make
more hypothesis.
\begin{coro}
\label{banan}
Let $A_i,\ i=1,2$ be two simple abelian varieties defined over 
$\bar\Q$ and $0\not =[\omega_i]\in H_\dR^1(A_i)$
such that the $\Z$-modules $\{\int_{\delta}\omega_i\mid \delta\in H_1(A_i,\Z)\}$ coincide.
Then there is an isomorphism  $a:A_1\rightarrow A_2$ such that $a^*[\omega_2]=[\omega_1]$. 
\end{coro}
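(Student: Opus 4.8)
The plan is to use Corollary \ref{stefan} to produce an isogeny $e\colon A_1\to A_2$ over $\bar\Q$ with $e^*[\omega_2]=n[\omega_1]$, and then to exploit the \emph{full} equality of the two period $\Z$-modules --- not merely the existence of one common period, which is all Corollary \ref{stefan} uses --- in order to divide $e$ by $n$ and land on an honest isomorphism. The one preliminary fact I would isolate is that for a simple abelian variety $A$ over $\bar\Q$ and $0\ne[\omega]\in H_\dR^1(A)$ the period homomorphism $\phi_\omega\colon H_1(A,\Q)\to\C$, $\delta\mapsto\int_\delta\omega$, is injective. This follows from Theorem \ref{dimension}: writing $H_1(A,\Q)=\oplus_{j=1}^s k\delta_j$ with $k=\End_0(A)$ and a general element as $\delta=\sum_j c_j\delta_j$, $c_j\in k$, and expanding $c_j\omega=\sum_l b_{jl}\omega_l$ in a $\bar\Q$-basis $\omega_1,\dots,\omega_r$ of $H_\dR^1(A)$, one gets $\int_\delta\omega=\sum_{j,l}b_{jl}\int_{\delta_j}\omega_l$ with $b_{jl}\in\bar\Q$; since the $s_A$ periods $\int_{\delta_j}\omega_l$ are $\bar\Q$-linearly independent (the refinement of Theorem \ref{dimension} recorded just after its statement), $\int_\delta\omega=0$ forces each $c_j\omega=0$, and as a nonzero $c_j$ acts invertibly on $H_\dR^1(A)$ while $\omega\ne0$, all $c_j=0$, i.e. $\delta=0$. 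A fortiori $\phi_\omega$ is injective on $H_1(A,\Z)$.

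For the main argument, let $\Lambda\subseteq\C$ be the common period module, pick $0\ne c\in\Lambda$ and $\delta_i\in H_1(A_i,\Z)$ with $\int_{\delta_i}\omega_i=c$. Since $A_1,A_2$ are simple and share the nonzero period $c$, Corollary \ref{stefan} yields an isogeny $e\colon A_1\to A_2$ over $\bar\Q$ with $e^*[\omega_2]=n[\omega_1]$ and $e_*\delta_1=n\delta_2$ for some $n\in\N$. The crucial claim is $e_*(H_1(A_1,\Z))\subseteq n\,H_1(A_2,\Z)$: for $\delta\in H_1(A_1,\Z)$, the equality $\Lambda=\phi_{\omega_2}(H_1(A_2,\Z))$ gives a $\gamma\in H_1(A_2,\Z)$ with $\int_\delta\omega_1=\int_\gamma\omega_2$, hence $\int_{e_*\delta}\omega_2=\int_\delta e^*\omega_2=n\int_\delta\omega_1=\int_{n\gamma}\omega_2$, and injectivity of $\phi_{\omega_2}$ forces $e_*\delta=n\gamma\in n\,H_1(A_2,\Z)$. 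Realizing $A_i=\C^g/\Lambda_i$ with $\Lambda_i=H_1(A_i,\Z)$ and lifting $e$ to a $\C$-linear $\tilde e\colon\C^g\to\C^g$, the claim says $\frac1n\tilde e(\Lambda_1)\subseteq\Lambda_2$, so $\frac1n\tilde e$ descends to a morphism $a\colon A_1\to A_2$. This $a$ is algebraic and, being the unique morphism with $n_{A_2}\circ a=e$, is defined over $\bar\Q$; from $n_{A_2}\circ a=e$ one gets $n\cdot a^*=e^*$ on $H_\dR^1$ and $n\cdot a_*=e_*$ on $H_1$, so $a^*[\omega_2]=\frac1n e^*[\omega_2]=[\omega_1]$.

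It remains to see that $a$ is an isomorphism. On $H_1(A_1,\Z)$ it is injective because $a_*=\frac1n e_*$ and $e_*$ is injective, and it is onto $H_1(A_2,\Z)$: given $\gamma\in H_1(A_2,\Z)$, choose $\delta\in H_1(A_1,\Z)$ with $\int_\delta\omega_1=\int_\gamma\omega_2$ using $\Lambda=\phi_{\omega_1}(H_1(A_1,\Z))$, then $\int_{a_*\delta}\omega_2=\int_\delta a^*\omega_2=\int_\delta\omega_1=\int_\gamma\omega_2$, so $a_*\delta=\gamma$ by injectivity of $\phi_{\omega_2}$. A morphism of abelian varieties that induces an isomorphism on first integral homology is itself an isomorphism, so $a\colon A_1\to A_2$ is an isomorphism with $a^*[\omega_2]=[\omega_1]$, as desired.

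The step I expect to be the genuine obstacle is the injectivity of $\phi_\omega$: this is precisely where the transcendence input (via Theorem \ref{dimension}) enters, and it is what upgrades the conclusion past Corollary \ref{stefan} by ruling out $n>1$; without it the argument stalls. For the elliptic curves of this paper it can alternatively be read off from the transcendence of every nonzero $\bar\Q$-linear combination of a period and a quasi-period of an elliptic curve over $\bar\Q$. The rest is elementary lattice arithmetic, the only mildly delicate points being that $\frac1n e$ really is an algebraic morphism over $\bar\Q$ (it is the unique $a$ with $n_{A_2}\circ a=e$) and that a morphism of abelian varieties bijective on integral $H_1$ is an isomorphism --- both standard.
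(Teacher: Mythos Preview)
Your proof is correct and follows essentially the same route as the paper's: apply Corollary \ref{stefan} to get an isogeny $e$ with $e^*[\omega_2]=n[\omega_1]$, use the full equality of period lattices together with the injectivity of $\phi_\omega$ (which is exactly the paper's appeal to Theorem \ref{dimension} when it deduces $a_*\delta=n\delta'$ from $\int_{a_*\delta-n\delta'}\omega_2=0$) to obtain $e_*H_1(A_1,\Z)\subseteq nH_1(A_2,\Z)$, and then divide $e$ by $n$. The only cosmetic differences are that you make the injectivity of $\phi_\omega$ an explicit preliminary lemma and that you factor $e=n_{A_2}\circ a$ on the target and check bijectivity of $a_*$ on integral homology, whereas the paper proves the two-sided equality $e_*H_1(A_1,\Z)=nH_1(A_2,\Z)$, identifies $\ker e$ with the $n$-torsion, and factors $e=b\circ n_{A_1}$ through the quotient $A_1/A_{1,n}$; the two factorizations are equivalent.
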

\begin{proof}
We fix $\delta_i\in H_1(A_i,\Z),\ i=1,2$ such that $\int_{\delta_1}\omega_1=\int_{\delta_2}\omega_2\not =0$, apply Corollary \ref{stefan} and obtain an isogeny 
$a:A_1\rightarrow A_2$ with  
$a^{*} [\omega_2]=n[\omega_1]$ and 
$a_*\delta_1=n\delta_2$
 for some  $n\in \N$. 
 We claim that $a_*H_1(A_1,\Z)=nH_1(A_2,\Z)$. 
 For an arbitrary $\delta\in H_1(A_,\Z)$ we have
$$
\int_{a_*\delta}\omega_2=\int_{\delta}n\omega_1=n\int_{\delta'}\omega_2, \ \hbox{ for some }\delta'\in H_1(A_2,\Z)
$$
Therefore, we have $\int_{a_*\delta-n\delta'}\omega_2=0$. Since $A_2$ is simple, by Theorem
\ref{dimension} we have $a_*\delta=n\delta'$ and so $a_*H_1(A_1,\Z)\subset nH_1(A_2,\Z)$.
In the same way we prove that $nH_1(A_2,\Z)\subset a_*H_1(A_1,\Z)$.

Let $A_{1,n}:=\{x\in A_1\mid nx=0\}$ be the $n$-torsion points of $A_1$.
There is an isomorphism $b:A_1\rightarrow A_2$ such that
$b\circ n_{A_1}=a$. To construct $b$ we proceed as follows: 
For a moment assume that  $a^{-1}(0_{A_2})=A_{1,n}$.
The quotient $B:=A_1/A_{1,n}$ is a well-defined abelian group
defined over $\bar\Q$ and the isogenies $a$ and $n_{A_1}$ induce isomorphisms
 $\tilde a : B\rightarrow 
A_2$ and $\tilde n: B\rightarrow A_1$ of abelian varieties. The isomorphism $b:= \tilde a\circ \tilde 
n^{-1}$ satisfies $b\circ n_{A_1}=a $.  In fact it is the one which we want: 
we have
$b_*\delta_1=\frac{1}{n}b_*(n\delta_1)=\frac{1}{n}a_*\delta_1=\delta_2$ and
$b^*\omega_2=\frac{1}{n}nb^*\omega_2=\frac{1}{n}a^{*}\omega_2=\omega_1$.

Let us prove  $ a^{-1}(0_{A_2})=A_{1,n}$ . It is enough to prove this equality in the analytic
context. We identify $t_{A_1}{\cong}t_{A_2}\cong \C^g$, where the first isomorphism is given by the derivative of $a$ at $0_{A_1}$, $H_1(A_i,\Z)\cong \Lambda_i
\subset \C^g$ and obtain a $\Z$-linear map ${\sf a}:\Lambda_1\rightarrow \Lambda_2$ which
induces a $\C$-linear isomorphism $\C^g\rightarrow \C^g$ (we identify $A_i$ with 
$\C^g/\Lambda_i,\ i=1,2$ and $a$ with ${\sf a}$). 
We have ${\sf a}(\Lambda_1)=n\Lambda_2$  and 
$A_{1,n}=\frac{\Lambda_1}{n}/\Lambda_1$. Therefore ${\sf a}A_{1,n}=0$ mod $\Lambda_2$.
If ${\sf a}(x)=0$ mod $\Lambda_2$ then  ${\sf a}(nx)=n\delta={\sf a}(\delta')$ for some
$\delta\in \Lambda_2,\ \delta'\in \Lambda_1$. Since ${\sf a}$ is injective we have $nx=\delta'$ and so
$x\in A_{1,n}$.
\end{proof}

\section{Proof of Theorem \ref{21feb06}}
\label{proof2}
For a modular foliation $\F_\eta$ we define:
$$
K_{\eta}=\{t\in T\mid \int_{\delta}\eta=0 \hbox{ for some } \delta\in H_1(E,\Z)\},
$$
$$
P_\eta:\C^3\rightarrow \C^3,\ 
P_\eta(t):=t\bullet \mat{p_2^{-1}}{p_1}{0}{p_2}=(t_1p_2^2+p_1p_2,t_2p_2^4,t_3p_2^6),
$$
and
$$
\tilde \Delta=\det(D_tP_\eta),
$$
where $p_i,\ i=1,2$ are given by (\ref{26feb06}).
Using the commutative diagram
$$
\begin{array}{cccc}
&T\backslash\{p_2=0\} &  \stackrel{\per}{\rightarrow} & \SL 2\Z\backslash\pedo  \\
 &P_\eta\downarrow  & & \downarrow  \tilde P_\eta \\
& T & \stackrel{\per}{\rightarrow} & \SL 2\Z\backslash\pedo  
\end{array},
$$
where $\tilde P_\eta$ is the map given by the action of $\mat{p_2^{-1}}{p_1}{0}{p_2}$ from left on $\SL 2\Z\backslash\pedo$,
one can show that $P_\eta$ maps
every leaf of $\F_{\eta}$ to a leaf of $\F(\Ra)$ (possibly a point) and so   
$$
D_tP_\eta(X(t))=\check \Delta \cdot \Ra(P(t)) \hbox{ for some } \check \Delta \in \C[t],
$$ 
where $X=\sum_{i=1}^3X_i\frac{\partial}{\partial t_i}$ is a polynomial vector field tangent to 
$\F_\eta$ and
$X_i$'s have no common factors. 

\begin{theo}
\label{nasim}
Let $\F_{\eta}$ be an elliptic modular foliation associated to the 
family (\ref{khodaya}) and $\eta$, where $\eta$ is defined over $\bar\Q$.
The following is true:
\begin{enumerate}
\item
For any point 
$a\in \bar\Q^3\cap (T\backslash \{p_1=p_2=0\})$ we have:
$$
\bar\Q^3 \cap L_a\subset P_\eta^{-1}P_\eta(a).
$$
In particular, for $a\in T\backslash(\{\tilde \Delta=0\}\cup \{p_2=0\})$ the intersection 
$\bar\Q^3\cap L_a$ is finite.
\item
$K_\eta\cap \bar \Q^3$ is the $\bar\Q$-rational points of the algebraic set 
 \begin{equation}
 \label{cap}
 \{t\in T \mid 0=
[\eta |_{E_t}]\in H_\dR^1(E_t)\}.
\end{equation}
\end{enumerate}
\end{theo}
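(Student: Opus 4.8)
The proof reduces the defining property of $\F_\eta$ to a statement about periods of elliptic curves defined over $\bar\Q$ and then applies Theorem \ref{dimension} and its corollaries. Write $\eta = p_1\omega_1 + p_2\omega_2$ with $\omega = (\frac{dx}{y},\frac{xdx}{y})^\tr$. Applying Proposition \ref{cano} with the (pointwise) element $\mat{p_2^{-1}}{p_1}{0}{p_2}\in G_0$, equivalently using the commutative diagram above, one gets, wherever $p_2(t)\neq 0$, that $\per(P_\eta(t)) = \per(t)\cdot\mat{p_2^{-1}}{p_1}{0}{p_2}$, whose second column is $\frac{1}{\sqrt{2\pi i}}(\int_{\delta_1}\eta,\int_{\delta_2}\eta)^\tr$. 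Hence there is a symplectic basis $\tilde\delta_1,\tilde\delta_2$ of $H_1(E_{P_\eta(t)},\Z)$ with $\int_{\tilde\delta_i}\frac{xdx}{y} = \int_{\delta_i}\eta$, $i=1,2$. Since the leaves of $\F_\eta$ are the loci where the $\int_{\delta_t}\eta$ are locally constant, if $b$ lies on $L_a$ then, continuing a symplectic basis $\delta_{1,a},\delta_{2,a}$ of $H_1(E_a,\Z)$ along a path in $L_a$ to a symplectic basis $\delta_{1,b},\delta_{2,b}$ of $H_1(E_b,\Z)$, we get $\int_{\delta_{i,a}}\eta = \int_{\delta_{i,b}}\eta$; consequently the $\Z$-module of $\frac{xdx}{y}$-periods of $E_{P_\eta(a)}$ coincides with that of $E_{P_\eta(b)}$, both being $\Z\int_{\delta_{1,a}}\eta+\Z\int_{\delta_{2,a}}\eta$.

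For part 1, fix $a\in\bar\Q^3\cap(T\backslash\{p_1=p_2=0\})$, so $[\eta|_{E_a}] = p_1(a)[\omega_1]+p_2(a)[\omega_2]\neq 0$, and let $b\in\bar\Q^3\cap L_a$. I would first show $p_2(a)=0\Leftrightarrow p_2(b)=0$: in either mismatch, one of $E_{P_\eta(a)},E_{P_\eta(b)}$ is a smooth elliptic curve over $\bar\Q$ whose $\frac{xdx}{y}$-period equals a nonzero period of the \emph{holomorphic} form $p_1(\cdot)\frac{dx}{y}$ on the other (likewise a smooth curve over $\bar\Q$), so Corollary \ref{stefan} forces $[\frac{xdx}{y}]$ to be, up to a nonzero integer multiple, the pullback of a holomorphic form under an isogeny --- absurd, since $\frac{xdx}{y}$ has a residue-free pole at infinity while pullbacks of holomorphic forms stay holomorphic. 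If $p_2(a)=0$ (hence $p_1(a)\neq 0$ and $p_2(b)=0$), then $P_\eta(a)=P_\eta(b)=(0,0,0)$ directly from the formula, and we are done. If $p_2(a)\neq 0$, then $p_2(b)\neq 0$, so $P_\eta(a),P_\eta(b)\in\bar\Q^3\cap T$, the two $\frac{xdx}{y}$-period $\Z$-modules coincide and are nonzero, and $E_{P_\eta(a)},E_{P_\eta(b)}$ are simple; Corollary \ref{banan} then yields an isomorphism $\psi:E_{P_\eta(a)}\to E_{P_\eta(b)}$ over $\bar\Q$ with $\psi^*[\frac{xdx}{y}]=[\frac{xdx}{y}]$. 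Since $\psi$ is an isomorphism, $\psi^*$ respects the cup product on de Rham cohomology and is the identity on $H^2_\dR$; as the holomorphic line is $\psi^*$-stable, $\psi^*[\frac{dx}{y}]=\lambda[\frac{dx}{y}]$, and comparing $\psi^*([\frac{dx}{y}]\cup[\frac{xdx}{y}])$ with $[\frac{dx}{y}]\cup[\frac{xdx}{y}]$ (which is nonzero) forces $\lambda=1$. Hence $\psi^*=\mathrm{id}$ on $H^1_\dR$, so $\psi_*$ carries a symplectic basis of $H_1(E_{P_\eta(a)},\Z)$ to one of $H_1(E_{P_\eta(b)},\Z)$ with the same period matrix, whence $\per(P_\eta(a))=\per(P_\eta(b))$; by injectivity of $\per$ (Proposition \ref{cano}) we conclude $P_\eta(a)=P_\eta(b)$. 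This proves $\bar\Q^3\cap L_a\subset P_\eta^{-1}P_\eta(a)$, and for $a\notin\{\tilde\Delta=0\}\cup\{p_2=0\}$ the map $P_\eta$ is dominant and \'etale at $a$, so $P_\eta^{-1}P_\eta(a)$, hence $\bar\Q^3\cap L_a$, is finite.

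Part 2 follows from Theorem \ref{dimension}. The inclusion $\{t\in\bar\Q^3\cap T:[\eta|_{E_t}]=0\}\subset K_\eta$ is clear, since a nonzero $\delta$ with $\int_\delta\eta=0$ always exists. Conversely, let $t\in\bar\Q^3\cap T$ and $0\neq\delta\in H_1(E_t,\Z)$ with $\int_\delta\eta=0$, and write $[\eta|_{E_t}]=\alpha[\omega_1]+\beta[\omega_2]$, $\alpha,\beta\in\bar\Q$. An elliptic curve is a simple abelian variety, so Theorem \ref{dimension}, together with the remark that its periods satisfy no $\bar\Q$-relations beyond the obvious ones (even with $1,2\pi i$), shows that $\int_\delta\omega_1$ and $\int_\delta\omega_2$ are $\bar\Q$-linearly independent: when $\End_0(E_t)=\Q$, write $\delta=m\delta_1+n\delta_2$ and use the independence of the four periods $\int_{\delta_j}\omega_i$; when $\End_0(E_t)$ is imaginary quadratic, write $\delta=c\delta_1$ with $0\neq c\in\End_0(E_t)$ acting invertibly on $H^1_\dR(E_t)$ and reduce to the independence of $\int_{\delta_1}\omega_1$ and $\int_{\delta_1}\omega_2$. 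Then $0=\int_\delta\eta=\alpha\int_\delta\omega_1+\beta\int_\delta\omega_2$ forces $\alpha=\beta=0$, i.e. $[\eta|_{E_t}]=0$; and $\{t\in T:[\eta|_{E_t}]=0\}$ equals $\{t\in T:p_1(t)=p_2(t)=0\}$, an algebraic subset.

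The step I expect to be hardest, in part 1, is the passage from ``$\psi^*[\frac{xdx}{y}]=[\frac{xdx}{y}]$ for some isomorphism $\psi$'' to ``$P_\eta(a)=P_\eta(b)$'': one has to upgrade $\psi^*$ to the identity on the whole of $H^1_\dR$ --- via its compatibility with the cup product --- so that the entire period matrix, not just the distinguished class, is matched, and only then invoke the injectivity of the period map. Keeping track of which cycle corresponds to which under continuation along the leaves, and handling the degenerate stratum $\{p_2=0\}$ on which $P_\eta$ collapses fibres, also demands care; by contrast, part 2 is essentially formal once Theorem \ref{dimension} is in hand.
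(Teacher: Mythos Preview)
Your argument is correct and rests on the same two pillars as the paper's --- Corollary \ref{banan} for part 1 and Theorem \ref{dimension} for part 2 --- but the execution in part 1 is organized differently. The paper applies Corollary \ref{banan} directly to $E_{a_1},E_{a_2}$ with the form $\eta$ itself, obtaining an isomorphism $b$ with $b^*[\eta]=[\eta]$; writing $b^*[\frac{dx}{y}]=k'[\frac{dx}{y}]$ it gets a period-matrix identity $\per(a_1)\mat{k}{p_1(a_1)}{0}{p_2(a_1)}=\per(a_2)\mat{p_2(a_2)^{-1}}{p_1(a_2)}{0}{p_2(a_2)}$, fixes $k=p_2(a_1)^{-1}$ by taking determinants (Legendre), and concludes $P_\eta(a_1)=P_\eta(a_2)$ straight from (\ref{gavril}). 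You instead first push both points through $P_\eta$ and apply Corollary \ref{banan} to $E_{P_\eta(a)},E_{P_\eta(b)}$ with $\frac{xdx}{y}$, then pin down the action on $[\frac{dx}{y}]$ via the cup product --- which is the same Legendre relation read in de Rham cohomology rather than in the period matrix. Your route buys an explicit treatment of the degenerate stratum $\{p_2=0\}$ (which the paper's write-up passes over), at the price of the extra transport step and the check that $P_\eta(a),P_\eta(b)\in T$. For part 2 your proof is the paper's, with the CM/non-CM dichotomy spelled out where the paper simply invokes Theorem \ref{dimension}.
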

\begin{proof}
Since $\eta$ is define over $\bar\Q$, we have $p_1,p_2\in\bar
\Q(t_1,t_2,t_3)$. 
If a leaf $L$ of $\F_{\omega}$ contains two distinct $a_i\in T,\ i=1,2$ points 
with algebraic coordinates then by the definition of a modular foliation, the
period $\Z$-modules $\{\int_\delta\eta\mid \delta\in H_1(E_{a_i},\Z)\}, \ i=1,2$  
coincide. We apply Corollary  \ref{banan} and conclude that
there is an isomorphism $b: E_{a_1}\rightarrow E_{a_2}$ with $b^*[\omega]=[\omega]$.
Let $b^*[p_2(a_2)^{-1}\frac{dx}{y}]=k\frac{dx}{y},\ k\in\C$. We have
$$
\per(a_1)\mat{k}{p_1(a_1)}{0}{p_2(a_1)}=\per(a_2)\mat{p_2(a_2)^{-1}}{p_1(a_2)}{0}{p_2(a_2)}
$$
Taking determinant of the above equality we get $k=p_2(a_1)^{-1}$ and using 
(\ref{gavril}), we conclude that $P_\eta(a_1)=P_\eta(a_2)$. 

If for some $0\not =\delta\in H_1(E_t,\Z)$ we have $\int_{\delta}\eta=0$ then using Theorem
\ref{dimension} we conclude that $0=[\eta|_{E_t}]\in H^1_\dR(E_t)$.
Note that the set (\ref{cap}) is equal to
$\{p_1(t)=p_2(t)=0\}$.
\end{proof} 

Theorem \ref{21feb06} follows from Theorem \ref{nasim} for $\eta=\frac{xdx}{y}$.
The map $P_{\frac{xdx}{y}}$ is identity and the set (\ref{cap}) is empty.

For the second part of Theorem \ref{21feb06} we give another proof.
Recall the notations in \S \ref{uniformi}. 
Suppose that there is a parameter $t\in T\cap \bar \Q^3$ such
that $\int_{\delta}\frac{xdx}{y}=0$, for some $\delta\in H_1(
E_t,\Z)$. We can assume that $\delta$ is not a multiple of another cycle
in $H_1(E_t,\Z)$. 
The corresponding period matrix of $t$ in a basis
$(\delta',\delta)$ of $H_1(E_t,\Z)$ has zero $x_4$-coordinate
and so the numbers
$$
t_i=F_i\mat{x_1}{x_2}{x_3}{0}x_3^{-2i}\es{i}(\frac{x_1}{x_3}),\ i=2,3,\
t_1=F_1\mat{x_1}{x_2}{x_3}{0}=x_3^{-2}\es{1}(\frac{x_1}{x_3})
$$
are in $\bar{\Q}$. 
This implies that for $z=\frac{x_1}{x_3}\in\uhp$ we have
$$
\frac{\es{3}}{\es{1}^3}(z), \frac{\es{2}}{\es{1}^2}(z),\
\frac{\es{3}^2}{\es{2}^3}(z)\in \bar{\Q}.
$$
This is in contradiction with the following:

{\bf Theorem} (Nesterenko 1996, \cite{nes01})
{\it For any $z\in\uhp$, the set
$$
e^{2\pi i z},\ \frac{g_1(z)}{a_1}, \frac{g_2(z)}{a_2},\frac{g_3(z)}{a_3}
$$
contains at least three algebraically independent numbers over $\Q$. }
\section{The family $y^2-4t_0(x-t_1)(x-t_2)(x-t_3)$}
\label{sarakar}
In this section we consider the family
\begin{equation}
\label{fereydun}
E_t: \ y^2-4t_0(x-t_1)(x-t_2)(x-t_3), \ t\in\C^4
\end{equation}
with the discriminant $\Delta=\frac{-16}{27}\left (t_0(t_1-t_2)(t_2-t_3)(t_3-t_1)\right )^2$.
First, let us identify the monodromy group associated to this family. Fix a smooth elliptic curve $E_t$.
In $H_1(E_t,\Z)$ we distinguish three cycles as follows:
In the $x$-plane, we join $t_{i-1}$ to $t_{i+1}$, $i=1,2,3,\ t_4=t_1,\ t_{-1}=t_3$ by a straight line $\tilde \delta_i$ and 
above it in $E_t$, we consider the closed cycle 
 $\delta_i=\delta_{i,1}-\delta_{i,2}$  which is a double covering of 
$\tilde \delta_i$, 
where by definition $\delta_4=\delta_1$. We assume that the triangle formed by $\tilde \delta_1,\tilde \delta_2$ and
$\tilde \delta_3$ in the $x$-plane is oriented anti-clockwise and so we have:
\begin{equation}
\label{dagigbash}
\langle \delta_i,\delta_{i+1}\rangle=1,\ i=1,2,3.
\end{equation}
Since $H_1(E_t,\Z)$ is of rank $2$, we have $n_1\delta_1+n_2\delta_2+n_3\delta_3=0$ in $H_1(E_t,\Z)$ for some $n_1,n_2,n_3\in\Z$ which
are not simultaneously   zero. The equalities (\ref{dagigbash}) imply that $n_1=n_2=n_3$ and so we have
$\delta_1+\delta_2+\delta_3=0$. A topological way to see this is to assume 
that the oriented triangles 
$\delta_{1,j}+\delta_{2,j}+\delta_{3,j},\ j=1,2$ are homotop to zero in $E_t$.
(for a better intuition take the paths $\tilde \delta_i$ such that the triangle formed by them has
almost zero area). 

We choose $\delta=(\delta_1,\delta_2)$ as a basis of $H_1(E_t,\Z)$. 
Let us now calculate the monodromy group in the basis $\delta$. Since for fixed $t_1,t_2,t_3$ the elliptic
curves $E_t$ with $t_0$ varying are biholomorphic to each other, the monodromy around
$t_0=0$ is trivial. For calculating other monodromies we assume that $t_0=1$.
It is not difficult to see that the monodromy around the hyperplane $t_{i-1}=t_{i+1}$ is given by 
$$
\delta_i\mapsto \delta_{i},\ \delta_{i-1}\mapsto \delta_{i-1}-2\delta_i,\ \delta_{i+1}\mapsto 
\delta_{i+1}+2\delta_{i}.
$$
We conclude that the monodromy group $\Gamma$ in the basis 
$(\delta_1,\delta_2)^\tr$
is generated by:
$$
A_1=\mat{1}{0}{2}{1}, A_2=\mat{1}{-2}{0}{1},\ A_3=\mat{-1}{-2}{2}{3},
$$
where $A_i$ is the monodromy around the hyperplane $t_{i-1}=t_{i+1}$. This is
the congruence group $\Gamma(2)=\{A\in \SL 2\Z \mid A\equiv_2 I\}$ which is
isomorphic to the permutation group in three elements.
Now, we consider the period map $\per:T\rightarrow \Gamma\backslash \pedo$, where 
$T:=\C^4\backslash \{\Delta=0\}$.
The calculation of the Gauss-Manin connection of the family 
(\ref{fereydun}) in the basis $\omega=(\frac{dx}{y},\frac{xdx}{y})^{\tr}$ and hence the derivative of
$\per$ can be done using the map which sends the family 
(\ref{fereydun}) to (\ref{khodaya1}). We have
$$
B=\frac{dt_1}{2(t_1-t_2)(t_1-t_3)}\mat{-t_1}{1}{t_2t_3-t_1(t_2+t_3)}{t_1}+
\frac{dt_2}{2(t_2-t_1)(t_2-t_3)}\mat{-t_2}{1}{t_1t_3-t_2(t_1+t_3)}{t_2}+
$$
$$
\frac{dt_3}{2(t_3-t_1)(t_3-t_2)}\mat{-t_3}{1}{t_1t_2-t_3(t_1+t_2)}{t_3},
$$
where $\nabla\omega=B\omega$. 
As before we can prove that the period map is  a global 
biholomorphism. 
 We look at its inverse $F=(F_0,F_1,F_2,F_3)$ 
 which satisfies:
{\tiny
\begin{equation}
\label{inv}
(DF)_x=\frac{1}{\det(x)}
\begin{pmatrix}
-F_0x_4
&F_0x_3
&F_0x_2
&-F_0x_1
\\F_1F_2x_3+F_1F_3x_3-F_1x_4-F_2F_3x_3
&-F_1x_3+x_4
&-F_1F_2x_1-F_1F_3x_1+F_1x_2+F_2F_3x_1
&F_1x_1-x_2
\\F_1F_2x_3-F_1F_3x_3+F_2F_3x_3-F_2x_4
&-F_2x_3+x_4
&-F_1F_2x_1+F_1F_3x_1-F_2F_3x_1+F_2x_2
&F_2x_1-x_2
\\-F_1F_2x_3+F_1F_3x_3+F_2F_3x_3-F_3x_4
&-F_3x_3+x_4
&F_1F_2x_1-F_1F_3x_1-F_2F_3x_1+F_3x_2
&F_3x_1-x_2
\end{pmatrix}.
\end{equation}
}
It is easy to see that $F_0(x)=\det(x)^{-1}$.
In a similar way as in \S\ref{alggr} we define the action of $G_0$ on $\C^4$ by
\begin{equation}
\label{action1}
t\bullet g:=(t_0k_1^{-1}k_2^{-1},
 t_1k_1^{-1}k_2+k_3k_1^{-1},
 t_2k_1^{-1}k_2+k_3k_1^{-1},
 t_3k_1^{-1}k_2+k_3k_1^{-1}), $$ $$ 
 t\in\C^4,
g=\mat {k_1}{k_3}{0}{k_2}\in G_0
\end{equation}
and it turns out that $\per(t\bullet g)=\per(t)\cdot g,\ t\in\C^4,\ g\in G_0$. Taking
$F$ of this equality we conclude that  $F_i,\ i=1,2,3$ satisfies
\begin{equation}
\label{mayer}
F_i(xg)=F_i(x)k_1^{-1}k_2+k_3k_1^{-1},\ i=1,2,3.
\end{equation}
We define $\theta_i,\ i=1,2,3$ to be the restriction of $F_i$ to $x=\mat{z}{-1}{1}{0},\ z\in\uhp$ 
and consider it as
a function in $z$. Now,  the equalities  (\ref{mayer}) imply that
\begin{equation}
\label{14feb2010}
 (cz+d)^{-2}\theta_i(Az)=\theta_{i}(z)+c(cz+d)^{-1}, \
\mat{a}{b}{c}{d}\in \Gamma, \ i=1,2,3.
\end{equation}
The  first column of  (\ref{inv}) implies that $(\theta_1,\theta_2,\theta_3):\uhp\rightarrow \C^3$
satisfies the   differential equation:
\begin{equation}
\label{ramanu}
\left \{ \begin{array}{l}
\dot t_1=t_1(t_2+t_3)-t_2t_3\\ 
\dot  t_2= t_2(t_1+t_3)-t_1t_3 \\
 \dot  t_3= t_3(t_2+t_3)-t_1t_2
\end{array} \right.
\end{equation}
The foliation induced by the above equations in $\C^3$ has the axis $t_1,t_2$ and $t_3$ as 
a singular set.  
It leaves the hyperplanes $t_i=t_j$ invariant and is integrable there. For instance, 
a first integral in $t_1=t_2$ is given by $\frac{t_1-t_2}{t_2^2}$.    
Considering the map from the family 
(\ref{fereydun}) to (\ref{khodaya1}), we conclude that:
$$
g_1=\frac{1}{3}(\theta_1+\theta_2+\theta_3), \ g_2=4\sum_{1\leq i<j\leq 3}(g_1-\theta_i)(g_1-\theta_j),\
g_3=4(g_1-\theta_1)(g_1-\theta_2)(g_1-\theta_3).
$$ 
We can write the Taylor series of $\theta_i$'s in $q=e^{2\pi i z}$. I do not know statements
similar to Proposition \ref{16feb06} for $\theta_i$'s.
\section{Another basis}
\label{21febr}
Let us consider the family (\ref{khodaya1}). 
Sometime it is useful to use the differential forms
\begin{equation}
\label{3mar05} \eta_1:=\frac{-2}{5}(2xdy-3ydx),\hbox{ and }
\eta_2:=\frac{-2}{7}x(2xdy-3ydx).
\end{equation}
They are related to $\omega_1,\omega_2$ by:
\begin{equation}
\label{classical}
\frac{d\eta_1}{df}=\frac{dx}{y},\ 
\frac{d\eta_2}{df}=\frac{xdx}{y}
\end{equation}
\begin{equation}
\label{18.2.06}
\begin{pmatrix}
\eta_1 \\
\eta_2 
\end{pmatrix}=
\begin{pmatrix}
\frac{4}{5}t_1t_2-\frac{6}{5}t_3 &
-\frac{4}{5}t_2 \\
\frac{1}{105t_0}(84t_0t_1^2t_2-36t_0t_1t_3-5t_2^2) &
-\frac{4}{5}t_1t_2-\frac{6}{7}t_3
\end{pmatrix}
\begin{pmatrix}
\omega_1 \\ \omega_2 \end{pmatrix}.
\end{equation}
Note that the above matrix has determinant $\frac{4}{105t_0}\Delta$ and so
$\eta_i,\ i=1,2$ restricted to a smooth elliptic curve $E_t$ form a basis of $H_\dR^1(E_t)$. 
The calculation of the Gauss-Manin connection with respect to the
basis $\eta=(\eta_1,\eta_2)^{\tr}$ leads to:
$$
A_0=\mat {\frac{21}{2}t_0t_1t_2t_3-9t_0t_3^2+\frac{3}{4}t_2^3} {-\frac{21}{2}t_0t_2t_3}
{\frac{21}{2}t_0t_1^2t_2t_3+9t_0t_1t_3^2-\frac{1}{2}t_1t_2^3-\frac{5}{8}t_2^2t_3}
{-\frac{21}{2}t_0t_1t_2t_3-18t_0t_3^2+\frac{5}{4}t_2^3}
$$
$$
A_ 1 =\mat 0 0 {27t_0^2t_3^2-t_0t_2^3} 0
$$
$$
A_ 2 =\mat {-\frac{63}{2}t_0^2t_1t_3-\frac{5}{4}t_0t_2^2} {\frac{63}{2}t_0^2t_3}
{-\frac{63}{2}t_0^2t_1^2t_3+\frac{1}{2}t_0t_1t_2^2+\frac{15}{8}t_0t_2t_3}
{\frac{63}{2}t_0^2t_1t_3-\frac{7}{4}t_0t_2^2}
$$
$$
A_ 3 =\mat {21t_0^2t_1t_2+\frac{45}{2}t_0^2t_3} {-21t_0^2t_2}
{21t_0^2t_1^2t_2-9t_0^2t_1t_3-\frac{5}{4}t_0t_2^2}
{-21t_0^2t_1t_2+\frac{63}{2}t_0^2t_3}
$$
where $\nabla\eta=(\frac{1}{\Delta}\sum_{i=1}^4A_idt_i)\eta$.
We have
$$
\F_{\eta_1}: \ \frac{\partial}{\partial t_1},\
\F_{\eta_2}: \ (-60t_1^2+5t_2)\frac{\partial}{\partial t_1}+
(48t_1t_2-72t_3)\frac{\partial}{\partial t_2}+
(72t_1t_3-4t_2^2)\frac{\partial}{\partial t_3}
$$
\begin{rem}\rm
Both differential forms $\omega_1$ and $\eta_1$ are invariant under the morphism 
$(x,y)\mapsto (x+s,y)$ and this is the reason why $\F_{\frac{dx}{y}}=\F_{\eta_1}$ is given by
$dt_2=0,\ dt_3=0$. This and the first row of the equality (\ref{18.2.06}) implies that
for constant $t_2,t_3$ the integral $\int_{\delta}\frac{xdx}{y}$ is a degree one  polynomial in $t_1$ and
hence $\nabla^2_{\frac{\partial}{\partial t_1}}\frac{xdx}{y}=0$. This equality can be also checked
directly from the Gauss-Manin connection (\ref{rosa}).

\end{rem}
\begin{rem}\rm
Consider the weighted ring $\R[x,y],\ \deg(x)=2,\ \deg(y)=3$. One can extend the definition of
the degree to the differential 1-forms $\omega$ in $\R^2$ by setting $\deg(dx)=2,\deg(dy)=3$.
Any real holomorphic foliation $\F(\omega)$ in $\R^2$ with $\deg(\omega)= 6$ has no limit
cycles. In fact, we can write $\omega=df-a\eta_1,\ a\in\R$ (up to multiplication by a constant and a linear change of coordinates), 
where $f$ is the polynomial
in (\ref{khodaya}) with $t\in\R^4$, and if $\F(\omega)$ has a limit
cycle $\delta$ then $0=\int_{\delta}{df}=a\int_{\delta}\eta_1=(-2a)\int_{\delta}dx\wedge dy$, which
is a contradiction. Considering $\F(\omega),\ \deg(\omega)=7$, we can write $\omega=df-a\eta_1-
b\eta_2,\ a,b\in\R$ and such a foliation can have limit cycles because the integral 
$\int_{\delta_s}\eta_2$ may have zeros, where $\delta_s$ is  a continuous family of real vanishing 
cycles parameterized by the image $s$ of $f$. 
 To count the zeros of $\int_{\delta_s}\eta_2$ we may do as follows: 
 We choose another cycle $\tilde \delta_s$ such that $\delta_s$ and $\tilde \delta_s$ form
 a basis of $H_1(\{f=s\},\Z)$ with $\langle \delta_s,\tilde\delta_s\rangle=1$. 
 The real valued function $B_2(s)=\Im(\int_{\delta_{s}}\eta_2\overline{\int_{\tilde\delta_{s}}\eta_2})$ is analytic in $\C\backslash\{c_1,c_2\}$, where $c_1$ and $c_2$ are critical values of $f$. It is 
continuous and zero  in $c_1,c_2$ .
 The intersection of the real curve  $B_2=0$ with the real line $\R$
is a bound for the number of zeros of $\int_{\delta_s}\eta_2$.
\end{rem}

\def\cprime{$'$} \def\cprime{$'$}

\bibliographystyle{plain}

\end{document}